\newtheorem{theorem}{Theorem}    
\newtheorem{proposition}[theorem]{Proposition}
\newtheorem{lemma}[theorem]{Lemma}
\theoremstyle{definition}
\numberwithin{theorem}{section}
\numberwithin{definition}{section}
\numberwithin{equation}{section}
\def\C{\mathbb{C}}
\def\R{\mathbb{R}}
\def\H{\mathbb{H}}
\newcommand{\ev}{\end{pmatrix}}
\def\bp{\mathbf{p}}
\def\bA{\mathbf{A}}
\def\bf{\mathbf{f}}
\def\bF{\mathbf{F}}
\def\bg{\mathbf{g}}
\def\bG{\mathbf{G}}
\def\G{\mathfrak{G}}
\def\bh{\mathbf{h}}
\def\T{\mathcal{T}}
\def\scriptO{\mathcal{O}}
\def\one{{\mathbf 1}}
\def\dist{\text{dist}}
\title{A Quantitative Stability Theorem for Convolution on the Heisenberg Group}
\author{Kevin O'Neill}
\address{
	Kevin O'Neill\\
        Mathematical Sciences Building\\
        One Shields Ave\\
        University of California \\
        Davis, CA 95616, USA}
\email{kwoneill@ucdavis.edu}
\begin{document}

\subjclass[2010]{Primary: 43A80, Secondary: 26D15
}

\begin{abstract}
Although convolution on Euclidean space and the Heisenberg group satisfy the same $L^p$ bounds with the same optimal constants, the former has maximizers while the latter does not. However, as work of Christ has shown, it is still possible to characterize near-maximizers. Specifically, any near-maximizing triple of the trilinear form for convolution on the Heisenberg group must be close to a particular type of triple of ordered Gaussians after adjusting by symmetry. In this paper, we use the expansion method to prove a quantitative version of this characterization.
\end{abstract} 

\maketitle

\textbf{Keywords:} Heisenberg group, quantitative stability, sharp constants

\section{Introduction}

For triples of functions $\bf=(f_1,f_2,f_3)$ with $f_j:\R^d\to\C$, let

\begin{equation*}
\T_{\R^d}(\bf):=\iint_{\R^d\times\R^d}f_1(x)f_2(y)f_3(-x-y)dxdy
\end{equation*}
denote the trilinear form of convolution on $\R^d$.

In dual form, Young's convolution inequality states that for any triple of exponents $\bp=(p_1,p_2,p_3)\in[1,\infty]^3$ with $\sum_{j=1}^3p_j^{-1}=2$,

\begin{equation*}
|\T_{\R^d}(\bf)|\leq \prod_{j=1}^3\|f_j\|_{p_j}
\end{equation*}
for all $f_j\in L^{p_j}$ ($1\leq j\leq 3$). (In this scenario, we will write $\bf\in L^\bp$.) Such $\bp$ will be deemed \textit{admissible}.

Beckner \cite{MR0385456} and Brascamp and Lieb \cite{MR0412366} established the stronger statement that

\begin{equation}\label{eq: optimal Young's R^d}
|\T_{\R^d}(\bf)|\leq \bA_\bp^d \prod_{j=1}^3\|f_j\|_{p_j},
\end{equation}
where 
\begin{equation}\label{eq:optimal constant definition}
\bA_\bp=\prod_{j=1}^3p_j^{1/(2p_j)}/(p'_j)^{1/(2p'_j)}
\end{equation}
and $p'$ is the conjugate exponent to $p$. Moreover, $\bA_\bp^d$ is the optimal constant in \eqref{eq: optimal Young's R^d}.

Brascamp and Lieb \cite{MR0412366} showed that equality is attained in \eqref{eq: optimal Young's R^d} precisely when $\bf$ is the particular ordered triple of Gaussians $\bg=(g_1,g_2,g_3)$ with $g_j(z)=e^{-\pi p_j'|z|^2}:=e^{-\gamma_j|z|^2}$, or the orbit of $\bg$ under the symmetries of the operator: scaling, translation, modulation, and diagonal action of the general linear group $Gl(d)$. The non-trivial part of this result is the uniqueness of maximizers up to symmetry; the set of maximizers must be invariant under symmetries of the operator since they do not change the ratio of the left hand side of \eqref{eq: optimal Young's R^d} to the right hand side.

If equality is nearly attained in \eqref{eq: optimal Young's R^d} for a particular triple of functions $\bf$, then one would like to say that $\bf$ is close to $\bg$. As stated, this is false, as the symmetries of convolution may be used to send an $\bf$ which is close to $\bg$ to another near-maximizing triple, far from $\bg$. For example, if $f_j=g_j+\delta\phi$ for $\phi\in C_c^\infty(\R^d)$ and small $\delta>0$, then $\bf$ is close to $\bg$; yet, if $f_j'(x)=f_j(10^5x)$, $\T_{\R^d}(\bf')/\prod_j\|f_j'\|_{p_j}=\T_{\R^d}(\bf)/\prod_j\|f_j\|_{p_j}$ while $\bf'$ is far from $\bg$. (A formal definition of closeness will be given shortly.) However, one may obtain a positive result of this type, provided one merely conclude some element in the orbit of $\bf$ is close to $\bg$.

Let $\scriptO(\bf)$ denote the orbit of a triple of functions $\bf=(f_1,f_2,f_3)$ under the aforementioned symmetries. Define the distance function
\begin{equation}\label{eq:define distance R^d}
\dist_\bp(\scriptO(\bf),\bg):=\inf_{\bh\in\scriptO_C(\bf)}\max_j\|h_j-g_j\|_{p_j}.
\end{equation}

A recent result of Christ \cite{ChristSY} states:

\begin{theorem}\label{thm:ChristSY}
Let $K$ be a compact subset of $(1,2)^3$. For each $d\geq1$, there exists $c>0$ such that for all admissible $\bp\in K$ and all $\bf\in L^\bp(\R^d)$,
\begin{equation*}
|\T_{\R^d}(\bf)|\leq \left(\bA^d_\bp-c\dist_\bp(\scriptO(\bf),\bg)^2\right)\prod_j\|f_j\|_{p_j}.
\end{equation*}
\end{theorem}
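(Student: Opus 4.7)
The strategy is the expansion method, combining a qualitative compactness reduction with a second-order Taylor expansion around $\bg$. First, one reduces to the case where $\bf$ is already $L^\bp$-close to $\bg$: if $|\T_{\R^d}(\bf)|/\prod_j\|f_j\|_{p_j}\leq\bA_\bp^d-\eta$ for a threshold $\eta>0$ uniform in $\bp\in K$, the inequality holds trivially for $c$ small. Otherwise, a qualitative stability result (via a concentration-compactness argument of Lions type, made uniform over the compact set $K$) yields an element $\bh\in\scriptO(\bf)$ with $\max_j\|h_j-g_j\|_{p_j}<\delta(\eta)$, where $\delta(\eta)\to 0$ as $\eta\to 0$. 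Since the symmetries preserve both the trilinear-to-norm ratio and the distance to the orbit, one may replace $\bf$ by $\bh$; moreover, choosing $\bh$ to minimize $\max_j\|h_j-g_j\|_{p_j}$ over its orbit imposes a gauge-fixing first-order orthogonality condition at $\bg$.

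Now write $f_j=g_j+u_j$ with $\max_j\|u_j\|_{p_j}$ small. Since each $g_j$ is strictly positive and $t\mapsto|t|^{p_j}$ is smooth away from zero, the deficit
\[
D(\bf):=\bA_\bp^d\prod_j\|f_j\|_{p_j}-\T_{\R^d}(\bf)
\]
Taylor expands as $D(\bg)+L_\bp(u_1,u_2,u_3)+Q_\bp(u_1,u_2,u_3)+R(u_1,u_2,u_3)$, where $D(\bg)=0$, the linear form $L_\bp$ vanishes because $\bg$ is a critical point of $D$, and the cubic remainder $R=O(\max_j\|u_j\|_{p_j}^3)$ is controlled by Young's inequality applied to $\T_{\R^d}(u_1,u_2,u_3)$ combined with a third-order expansion of each $\|g_j+u_j\|_{p_j}$.

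The heart of the argument is to show that $Q_\bp$ is coercive on the orthogonal complement of the tangent space $T_\bg$ to the symmetry orbit at $\bg$, uniformly in $\bp\in K$. By the Brascamp-Lieb classification, $T_\bg$ is finite-dimensional, spanned by infinitesimal translations, modulations, dilations, and $Gl(d)$-deformations applied to each $g_j$. Conjugating by the Gaussian weight via $u_j=g_j v_j$ recasts $Q_\bp$ as a quadratic form on a Gaussian-weighted $L^2$, where a Hermite polynomial decomposition diagonalizes the associated operator; one expects the kernel to be exactly $T_\bg$ (constants, linear coordinates, and the quadratics $|z|^2$ and $z_iz_j$ on each factor), while all remaining Hermite modes contribute strictly positive eigenvalues, yielding a uniform spectral gap $c_0>0$ after a continuity argument in $\bp\in K$. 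This spectral gap analysis---ruling out accidental zero modes outside $T_\bg$ and securing uniformity in $\bp$---is the principal obstacle. Once it is in place, the gauge-fixing places $(u_1,u_2,u_3)$ essentially orthogonal to $T_\bg$, so $Q_\bp(u_1,u_2,u_3)\geq c_0\sum_j\|u_j\|_{p_j}^2$; absorbing the cubic remainder $R$ for $\max_j\|u_j\|_{p_j}$ small yields $D(\bf)\gtrsim\dist_\bp(\scriptO(\bf),\bg)^2$, which rearranges to the claimed quantitative inequality.
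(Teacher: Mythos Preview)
This theorem is not proved in the paper; it is quoted as a result of Christ from \cite{ChristSY}. However, the paper does extract the key intermediate step (Theorem~\ref{thm:Christ intermediate}) and the decomposition \eqref{eq:def fsharp}, and these expose a genuine gap in your plan.

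Your claim that the remainder satisfies $R=O(\max_j\|u_j\|_{p_j}^3)$ is false for $p_j\in(1,2)$. The map $t\mapsto|t|^{p_j}$ has second derivative $\sim|t|^{p_j-2}$, which blows up at $t=0$; smoothness away from zero is not enough, because although $g_j>0$ pointwise, $g_j(x)\to 0$ as $|x|\to\infty$, so an $L^{p_j}$-small perturbation $u_j$ can make $g_j+u_j$ vanish or change sign on a set of positive measure. Consequently $\|g_j+u_j\|_{p_j}^{p_j}$ admits no third-order Taylor expansion with a cubic remainder in $\|u_j\|_{p_j}$. Christ's actual argument, as reflected in \eqref{eq:def fsharp} and Theorem~\ref{thm:Christ intermediate}, handles this by splitting $u_j=u_{j,\sharp}+u_{j,\flat}$ with $|u_{j,\sharp}|\le\eta g_j$ pointwise: on the $\sharp$ part the expansion is legitimate, while the $\flat$ part contributes a favorable term $-\tilde{c}\sum_j\|u_{j,\flat}\|_{p_j}^{p_j}$ (note the exponent $p_j<2$, stronger than quadratic for small norms), which then absorbs the cross terms. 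Your plan omits this mechanism entirely. A secondary issue: gauge-fixing by minimizing the $L^{p_j}$ distance over the orbit does not, for $p_j\neq 2$, yield the linear orthogonality conditions needed to make $Q_\bp$ coercive; Christ instead imposes the specific pairings $\langle u_j,P_\alpha^{(\tau_j)}g_j^{p_j-1}\rangle=0$ via an implicit-function-theorem balancing step (compare Lemma~\ref{lemma: balancing}).
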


A particular rephrasing of the conclusion of Theorem \ref{thm:ChristSY} states there exists $C>0$ such that if $|\T_{\R^d}(\bf)|\geq(1-\delta)\bA_\bp^d\prod_j\|f_j\|_{p_j}$, then $\dist_\bp(\scriptO_C(\bf),\bg)<C\sqrt{\delta}$. As originally stated in \cite{ChristSY}, the distance is between $\bf$ and the manifold of maximizing triples (i.e., $\scriptO(\bg)$), though the two definitions are easily shown to be equivalent after proper rescaling.

The purpose of this paper is to prove an analogue of Theorem \ref{thm:ChristSY} for the Heisenberg group.

The Heisenberg group $\mathbb{H}^d$ is the set $\R^{2d+1}$ identified as $\{z=(x,t):x\in \R^{2d},t\in\R\}$ with the group operation

\begin{equation*}
z\cdot z'=(x,t)\cdot(x',t'):=(x+x',t+t'+\sigma(x,x')),
\end{equation*}
where $\sigma$ is the symplectic form on $\R^{2d}$ defined by

\begin{equation*}
\sigma(x,x'):=\sum_{j=1}^d x_jx'_{j+d}-x_{j+d}x_j'.
\end{equation*}

In $\H^d$, the inverse of $(x,t)$ is $(-x,-t)$ and the Haar measure is Lebesgue measure on $\R^{2d+1}$.

Define the trilinear form for convolution on $\mathbb{H}^d$ by

\begin{equation*}
\T_{\H^d}(\bf)=\iint_{\H^d\times\H^d}f_1(z_1)f_2(z_2)f_3(z_2^{-1}z_1^{-1})dz_1dz_2.
\end{equation*}

It was shown by Klein and Russo \cite{KleinRusso} and Beckner \cite{Beckner2} that

\begin{equation}\label{eq: optimal Young's H^d}
|\T_{\H^d}(\bf)|\leq \bA_\bp^{2d+1} \prod_{j=1}^3\|f_j\|_{p_j},
\end{equation}
where $\bA_\bp^{2d+1}$ is optimal. (This is the same $\bA_\bp^{2d+1}$ as is defined in \eqref{eq:optimal constant definition}). Furthermore, Beckner observed that there are no maximizers of \eqref{eq: optimal Young's H^d}.

Consider the example $f_j(x,t)=e^{-\gamma_j(\lambda|x|^2+\lambda^{-1}t^2+i\lambda^{-1}t)}$, where $\lambda\to\infty$. Viewed as functions on $\R^{2d+1}$, $\bf$ is a maximizing triple for convolution for all $\lambda$. However, one may check by computation that $\bf$ is a maximizing sequence for convolution on $\H^d$ (that is, $\T_{\H^d}(\bf)/\prod_j\|f_j\|_{p_j}\to\bA_\bp^d$), yet equality is not attained in \eqref{eq: optimal Young's H^d} for any $\lambda$. Furthermore, the limit of $\bf$ as $\lambda\to\infty$ does not exist in $L^\bp(\H^d)$. While this is not a proof of Beckner's observation, it does provide a useful heuristic.

What accounts for this difference between $\R^{2d+1}$ and $\H^d$?  One explanation is that on $\R^{2d+1}$, the diagonal action of $Gl(2d+1)$ is a symmetry for convolution, allowing one to ``return" $\lambda|x|^2+\lambda^{-1}t^2$ to the $|x|^2+t^2$ found in the standard maximizing triple $\bg$. Furthermore, the modulation symmetry allows one to remove the oscillatory factor.

On $\H^d$ however, the symmetries of convolution do not include modulation in the $t$ variable nor the entirety of $Gl(2d+1)$. In some sense, these ``missing symmetries'' are the only obstacle to the existence of maximizers and all maximizing triples for convolution on $\H^d$ are close to a triple of similar form after adjusting by the appropriate symmetries (see the work of Christ, \cite{ChristHeisenberg}). The goal of this paper is to provide quantitative bounds for this closeness.

To state our main result requires a little more background.

Let $Sp(2d)$ denote the symplectic group on $\R^{2d}$, the set of matrices $S$ such that $\sigma(Sx,Sy)=\sigma(x,y)$ for all $x,y\in\R^{2d}$ with the group operation of matrix multiplication.

Formally, by a symmetry of $\T_{\H^d}$, we mean an operation on $\bf$ which preserves the ratio $|\T_{\H^d}(\bf)|/\prod_j\|f_j\|_{p_j}$. For $\H^d$, the symmetries of interest are:

\begin{itemize}
\item $f_j\mapsto a_jf_j$ for $a_j\in\C\setminus\{0\}$. (Scaling)
\item $f_j(x,t)\mapsto f_j(rx,r^2t)$ for $r\in(0,\infty)$. (Dilation)
\item $f_j(z)\mapsto f_j(u_jzw_j)$ with $w_1=u_2^{-1}, w_2=u_3^{-1}$, and $w_3=u_1^{-1}$. (Translation)
\item $f_j(x,t)\mapsto f_j(Sx,t)$ for $S\in Sp(2d)$. (Diagonal Action of the Symplectic Group)
\item $f_j(x,t)\mapsto f_j(x,t+\varphi(x))$, where $\varphi:\R^{2d}\to\R$ is linear. (Shear Transformation)
\item $f_j(x,t)\mapsto e^{iu\cdot x}f_j(x,t)$ for $u\in\R^{2d}$. (Modulation in $x$)
\end{itemize}

Let $\Sigma(\T_{\H^d})$ denote the group generated by the above symmetries. 
Following \cite{ChristHeisenberg}, we define a canonical $\epsilon$-diffuse Gaussian to be a function of the form $G(x,t)=e^{-|Lx|^2-at^2+ibt}$, where $a>0, b\in\R, L\in Gl(2d)$, and 
\begin{equation*}
\max\{a,a^{1/2},b\}\|L^{-1}\|<\epsilon.
\end{equation*}
Furthermore, given admissible $\bp$, a triple of canonical $\epsilon$-diffuse Gaussians of the form $G_j=e^{-|L_jx|^2-a_jt^2+ib_jt}$ is said to be $\bp$-admissible if there exist $L,a$, and $b$ such that $L_j=\gamma_j^{1/2}, a_j=\gamma_j a$, and $b_j=b$ for $1\leq j\leq 3$.

Lastly, we say an ordered triple $\mathbf{G}=(G_1,G_2,G_3)$ of Gaussians is $\epsilon$-diffuse and $\bp$-compatible if there exists $\Psi\in\Sigma(\T_{\H^d})$ and a $\bp$-compatible ordered triple $\tilde{\mathbf{G}}=(\tilde{G}_1,\tilde{G}_2,\tilde{G}_3)$ of canonical $\epsilon$-diffuse Gaussians such that $\mathbf{G}=\Psi\tilde{\mathbf{G}}$ for $1\leq j\leq 3$.

Our main result is the following:

\begin{theorem}\label{thm: main stability}
Let $d\geq1$ and $K\subset(1,2)^3$ be compact. Then, there exists a $C>0$ with the following property. Let $\bp\in K$ be admissible, $\bf\in L^\bp(\H^d)$, and $\|f_j\|_{p_j}\neq0$ for all $1\leq j\leq 3$. Let $\delta\in(0,1)$ and suppose that $|\T_{\H^d}(\bf)|\geq(1-\delta)\bA_\bp^{2d+1}\prod_j\|f_j\|_{p_j}$. Then there exists a $\bp$-compatible $C\sqrt{\delta}$-diffuse ordered triple of Gaussians $\bG=(G_1,G_2,G_3)$ such that
\begin{equation}\label{eq:main thm conclusion}
\|f_j-G_j\|_{p_j}<C\sqrt{\delta}\|f_j\|_{p_j}\text{    for }j\in\{1,2,3\}.
\end{equation}
\end{theorem}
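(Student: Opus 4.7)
The plan is to combine the Euclidean stability theorem (Theorem~\ref{thm:ChristSY}) with the expansion method in order to quantitatively upgrade the qualitative Heisenberg characterization of \cite{ChristHeisenberg}. I would first use the Euclidean result to place $\bf$ near a Gaussian after applying a Euclidean symmetry, then identify how that symmetry decomposes into a Heisenberg piece plus a diffuse-parameter piece, and finally use a direct expansion of the trilinear form to sharpen the qualitative closeness into the claimed $\sqrt{\delta}$ rate.

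Since $\bA_\bp^{2d+1}$ is the shared sharp constant, $\bf$ is automatically a near-maximizer on $\R^{2d+1}$ with comparable deficit, and Theorem~\ref{thm:ChristSY} supplies an element $\Phi$ of the full Euclidean symmetry group such that $\|\Phi f_j - g_j\|_{p_j} \lesssim \sqrt{\delta}\|f_j\|_{p_j}$. Because $\Sigma(\T_{\H^d})$ is a proper subgroup of the full Euclidean symmetry group, I would factor $\Phi = \Psi \circ R$, with $\Psi \in \Sigma(\T_{\H^d})$ and $R$ drawn from a set of coset representatives. The ``missing'' Euclidean degrees of freedom---modulation in $t$, independent $t$-dilation, and the portion of the $Gl(2d)$-action on $x$ not contained in $Sp(2d)$---are precisely what is encoded in the parameters $(L,a,b)$ of a canonical diffuse Gaussian. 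Setting $\tilde{\bG} = R\bg$ and $\bG = \Psi^{-1}\tilde{\bG}$ produces a $\bp$-compatible Gaussian triple with $\|f_j - G_j\|_{p_j} \lesssim \sqrt{\delta}\|f_j\|_{p_j}$, yielding \eqref{eq:main thm conclusion}.

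The main obstacle is quantitatively controlling the diffuse parameters: showing that $(L, a, b)$ satisfies $\max\{a, a^{1/2}, b\}\|L^{-1}\| \lesssim \sqrt{\delta}$, rather than merely being small. This is where the expansion method is indispensable. After a preliminary normalization, I would write $f_j = G_j + h_j$ with $h_j$ chosen orthogonal (in $L^{p_j}$--$L^{p_j'}$ duality) to the tangent space of the Heisenberg orbit through $\bG$, and expand
\begin{equation*}
\T_{\H^d}(\bG + \bh) = \T_{\H^d}(\bG) + L(\bh) + Q(\bh) + \text{higher order}.
\end{equation*}
The linear term vanishes by extremality within the Gaussian family, the higher-order terms are cubic and controlled by H\"older, and the quadratic form $Q$ is negative semi-definite by the Brascamp--Lieb second-variation identity. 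The null directions of $Q$ at $\bg$ coincide with the tangent space of the full Euclidean symmetry orbit, so after projecting off the Heisenberg tangent space the residual null space is finite-dimensional and spanned by the diffuse-parameter directions. The technical heart of the argument is establishing uniform coercivity of $-Q$ off this finite-dimensional residual, uniformly for $\bp \in K$ and for $\bG$ sufficiently close to $\bg$; once that is in place, combining it with the hypothesis $|\T_{\H^d}(\bf)| \geq (1-\delta)\bA_\bp^{2d+1}\prod_j\|f_j\|_{p_j}$ delivers simultaneously $\|h_j\|_{p_j} \lesssim \sqrt{\delta}\|f_j\|_{p_j}$ and $\max\{a, a^{1/2}, b\}\|L^{-1}\| \lesssim \sqrt{\delta}$, completing the proof.
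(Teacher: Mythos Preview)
Your opening step contains a genuine gap. You assert that because $\bA_\bp^{2d+1}$ is the shared sharp constant, a near-maximizer for $\T_{\H^d}$ is automatically a near-maximizer for $\T_{\R^{2d+1}}$ with comparable deficit. This is false: the two trilinear forms are different integrals (one over $z_1+z_2+z_3=0$, the other over $z_1\cdot z_2\cdot z_3=0$ in Heisenberg multiplication), and there is no pointwise or global inequality $|\T_{\H^d}(\bf)|\le|\T_{\R^{2d+1}}(\bf)|$ to invoke. So Theorem~\ref{thm:ChristSY} cannot be applied to $\bf$ directly, and the $\sqrt{\delta}$ closeness in \eqref{eq:main thm conclusion} does not come for free. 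The paper avoids this by embedding both forms into a single family $\T(\bf,A,b)$ and proving a stability theorem for that family relative to the point $(\bg,0,0)$; the Euclidean sharpened inequality then enters only for the $(A,b)=(0,0)$ slice of the expansion, not as a black box applied to the original $\bf$.

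There is a second, related gap in your expansion step. You write $\T_{\H^d}(\bG+\bh)=\T_{\H^d}(\bG)+L(\bh)+Q(\bh)+\cdots$ and claim the linear term vanishes ``by extremality within the Gaussian family.'' But $\T_{\H^d}$ has no maximizers, so no diffuse Gaussian triple $\bG$ is a critical point, and $L$ does not vanish. The paper's decomposition \eqref{eq:main expansion} is designed precisely to sidestep this: it expands $\T(\bg+\bf,A,b)$ around the Euclidean endpoint $(\bg,0,0)$, where $\bg$ genuinely is extremal, and then controls the two difference terms $\T(\cdot,A,b)-\T(\cdot,A,0)$ and $\T(\cdot,A,0)-\T(\cdot,0,0)$ separately by Taylor-expanding in the symplectic perturbation $\sigma(Ax_1,Ax_2)$. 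Those difference terms are what produce the negative $-c\|A^TJA\|^2$ and $-cb^2\|A^TJA\|^2$ contributions that quantitatively pin down the diffuse parameters; they do not arise from a quadratic form in $\bh$ alone. Your coercivity analysis of $Q$ would need to be replaced by this mixed expansion in $(\bf,A,b)$, together with a balancing lemma to enforce the orthogonality conditions under which the Euclidean intermediate estimate applies.
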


The exponent 2 found in \eqref{eq:main thm conclusion} is sharp.

Prior work of Christ \cite{ChristHeisenberg} establishes a qualitative stability theorem of a similar form. This result is of the same form as Theorem \ref{thm: main stability}, yet refers to an undetermined function $\epsilon(\delta)$ satisfying $\lim_{\delta\to0}\epsilon(\delta)=0$ in place of $C\sqrt{\delta}$. We state this result as Theorem \ref{thm: Christ qualitative} and use it to reduce to small perturbations in Section \ref{sec: Reduce}.

Also in Section \ref{sec: Reduce}, we will develop a translation scheme between convolution on the Heisenberg group and a generalized operator which will allow us to prove Theorem \ref{thm: main stability} through the expansion method of Bianchi and Egnell \cite{BE}.

In Sections \ref{sec:Expansion in Group Structure} and \ref{sec:Expansion in Twisting Factor}, we compute some terms of the expansion. Section \ref{sec:convolution term} determines what is needed to apply a sharpened form of Young's inequality due to Christ to handle the remaining term of the expansion. In Section \ref{sec:Balancing Lemma}, we prove a balancing lemma to attain these conditions, allowing us to combine all the terms and conclude the proof of Theorem \ref{thm: main stability} in Section \ref{sec:main proof}.

In Section \ref{sec: variants}, we establish some variants of Theorem \ref{thm: main stability} in cases where at least one $p_j$ is greater than or equal to 2, as in \cite{ChristSY}. 

\textbf{Acknowledgment:} The author would like to thank Michael Christ for the suggestion of the problem and some helpful conversations.

\section{Translation Into a Distance}\label{sec: Reduce}

To prove Theorem \ref{thm: main stability}, we will use the expansion method. The first obstacle in performing the expansion is that, as previously discussed, there are no maximizers for convolution on the $\H^d$. Our solution is to use the maximizers for convolution on $\R^{2d+1}$ for comparison, continuously varying the group structure between that of the two spaces. This leads to another obstacle, in that the symmetry groups for convolution on $\H^d$ and $\R^{2d+1}$ differ. This is a problem because with the differing symmetry groups, a near-maximizer for convolution on $\H^d$ such as $f_j(x,t)=e^{-\gamma_j(\lambda|x|^2+\lambda^{-1}t^2+i\lambda^{-1}t)}$ cannot be sent to a small neighborhood of the fixed maximizing triple $g_j(z)=e^{-\gamma_j|z|^2}$ under the symmetries for $\H^d$.

To resolve this second issue, we introduce a new functional which generalizes the trilinear forms for convolution on both $\R^{2d+1}$ and $\H^d$. 
 This new functional effectively allows for a more flexible group structure, so general elements of $Gl(2d+1)$ may act as symmetries by changing this group structure. Similar ideas were applied to the case of twisted convolution by the author in \cite{MeTC}.

The generalized functional is:

\begin{equation*}
\T(\bf,A,b):=\iint f_1(z_1)f_2(z_2)f_3(-z_1-z_2-e_{2d+1}\sigma(Ax_1,Ax_2))e^{ib\sigma(Ax_1,Ax_2)}dz_1dz_2,
\end{equation*}
where $\bf\in L^{\bp}(\R^{2d+1})$, $A$ is a $(2d)\times(2d)$ matrix, $b\in\R$, and $e_{2d+1}=(0,...,0,1)\in\R^{2d+1}$. $(A,b)$ will be referred to as the \textit{attached parameters}.

Since one may view $\T$ as a convolution-like operator with varying group structure, it will be helpful to use $\cdot_A$ to 
denote the group operation $(x,t)\cdot_A(x',t'):=(x+x',t+t'+\sigma(Ax,Ax'))$.

Let $\tau^A_uf(z)=f(u\cdot_Az)$ and $\tilde{\tau}^A_wf(z)=f(z\cdot_Aw)$ to represent left and right translation, respectively. Modulation will be represented by the notation $M_\xi f(z)=e^{iz\cdot\xi}f(z)$. We will write $D_rf(x,t)=f(rx,r^2t)$ for dilation. Through slight abuse of notation, we will often write $f\circ A(x,t)=f(Ax,t)$ for linear maps $A:\R^{2d}\to\R^{2d}$ and $\bf\circ A$ to denote $(f_1\circ A,f_2\circ A,f_3\circ A)$.

By a symmetry of $\T$, we mean an operation on $(\bf,A,b)$ which preserves $\Phi(\bf,A,b):=|\T(\bf,A,b)|/\left(\prod_{j=1}^3\|f_j\|_{p_j}\right)$. Here, the relevant symmetries are:
\begin{itemize}
\item $\Phi((a_1f_1,a_2f_2,a_3f_3),A,b)=\Phi((f_1,f_2,f_3),A,b)$ for $a_j\in\C\setminus\{0\}$. (Scaling)
\item For $u_j=(U_j,U_j'),w_j=(W_j,W_j')\in\R^{2d}\times\R$ satisfying $w_1=u_2^{-1}, w_2=u_3^{-1}$, and $w_3=u_1^{-1}$, \begin{multline*}
\Phi(M_{-bA^TJA(U_2-U_3)}\tilde{\tau}_{w_1}^A\tau_{u_1}^Af_1,M_{-b(A(U_1-U_2))^TJA}\tilde{\tau}_{w_2}^A\tau_{u_2}^Af_2,\tilde{\tau}_{w_3}^A\tau_{u_3}^Af_3,A,b)\\
=\Phi(f_1,f_2,f_3,A,b).
\end{multline*}
(Translation-Modulation)
\item $\Phi(M_\xi f_1,M_\xi f_2,M_\xi f_3,A,b+\xi_{2d+1})=\Phi(f_1,f_2,f_3,A,b)$ for $\xi\in\R^{2d+1}$. (Modulation)
\item $\Phi(\bf\circ L,AL,b)=\Phi(\bf,A,b)$ for $A\in Gl(2d)$. (Diagonal Action of $Gl(2d)$)
\item $\Phi(D_rf_1,D_rf_2,D_rf_3,A,r^2b)=\Phi(f_1,f_2,f_3,A,b)$ for $r\in(0,\infty)$. (Dilation)
\item Let $\varphi:\R^{2d}\to\R$ be linear and let $g_j=f_j(x,t+\varphi(x))$. Then, $\Phi(\bg,A,b)=\Phi(\bf,A,b)$. (Sheer Transformation)
\end{itemize}
Note that 
$A$ changes precisely under $Gl(2d)$ symmetries, and $b$ changes precisely under modulations in the $(2d+1)$-st coordinate and dilation.

While some of the above symmetries may appear complicated, we will see shortly that for our purpose they may usually be applied in the special case $A=Id, b=0$, simplifying their expressions; here and throughout, $Id$ refers to the $(2d)\times(2d)$ identity matrix. (For instance, the translation-moldulation symmetry becomes
\begin{equation}\label{eq:simplified form}
\Phi(\tilde{\tau}_{w_1}^{Id}\tau_{u_1}^{Id}f_1,\tilde{\tau}_{w_2}^{Id}\tau_{u_2}^{Id}f_2,\tilde{\tau}_{w_3}^{Id}\tau_{u_3}^{Id}f_3,A,b)=\Phi(f_1,f_2,f_3,A,b),
\end{equation}
where $\tau^{Id}$ and $\tilde{\tau}^{Id}$ represent the usual translation on $\mathbb{H}^d$.) This is because the definition of orbit will allow for rather restricted use of symmetries.




At this point, one may expect to prove a direct analogue of Theorem \ref{thm:ChristSY} for the operator $\T(\bf,A,b)$. While this is possible, we desire something a little stronger to recover Theorem \ref{thm: main stability}. The $c\sqrt{\delta}$-diffuse Gaussians of Theorem \ref{thm: main stability} are not obtained through the action of \emph{any} symmetry of $\T(\bf,A,b)$ on $\bg$; rather, symmetries of $\T_{\H^d}$ are applied only \emph{after} symmetries of $\T(\bf,A,b)$ which are not symmetries of $\T(\H^d)$. Thus, we must define an alternative to the usual notion of orbit.


Let $\G_0$ be the group of symmetries of $\T(\bf,A,b)$ generated by $Gl(2d)$ and modulation in the $t$ variable and let $\G_1$ be the group generated by the remaining symmetries on the list, along with the diagonal action of $Sp(2d)$. Note that $\G_1$ is in one-to-one correspondence with $\Sigma(\T_{\H^d})$ when $A=Id$ and $b=0$. While $\Psi\in\G_j$ ($j=0,1$) is defined as acting on a tuple of the form $(\bf,A,b)$, we will often write $\Psi f_j$ to denote its action on a particular function. In such a scenario, we will attempt to be particularly clear on what $\Psi$ does to $f_j$, given the action may depend on the attached parameters.

Let $\tilde{\scriptO}(\bf,A,b)$ denote the set of elements of the form $\Psi_0\Psi_1((e^{-ibt}\bf)\circ A^{-1},Id,0)$ with $\Psi_j\in\G_j$ ($j=0,1$) and define

\begin{multline}\label{def:distance}
\dist_\bp(\tilde{\scriptO}(\bf,A,b),(\bg,0,0))^2:=\\\inf_{(\bh,M,r)\in\tilde{\scriptO}(\bf,A,b)}\max_j\|h_j-g_j\|_{p_j}^2+\|M^TJM\|^2+r^2\|M^TJM\|^2.
\end{multline}
Through this careful definition of distance, we will be able to recover Theorem \ref{thm: main stability} from the following result.

\begin{theorem}\label{thm: main distance}
Let $d\geq1$ and $K\subset(1,2)^3$ be a compact subset of admissible triples of exponents. Then, there exists $c>0$ such that for all $\bp\in K, \bf\in L^\bp(\R^{2d+1})$, $(2d)\times(2d)$ matrices $A$, and $b\in\R$,   
\begin{equation*}
|\T(\bf,A,b)|\leq(\bA_\bp^{2d+1}-c\dist_\bp(\tilde{\scriptO}(\bf,A,b),(\bg,0,0))^2)\prod_j\|f_j\|_{p_j}.
\end{equation*}
\end{theorem}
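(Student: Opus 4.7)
\emph{Strategy.} At the attached matrix $M=0$, the operator $\T(\bh, M, r)$ collapses to the Euclidean convolution form $\T_{\R^{2d+1}}(\bh)$, which is maximized by the Gaussian triple $\bg$. I therefore plan to expand $\T(\bh, M, r)$ around the base point $(\bg, 0, 0)$ in three parameters: $\phi_j := h_j - g_j$ measured in $L^{p_j}$, the antisymmetric matrix $N := M^T J M$ (which governs the deviation from Euclidean structure since $\sigma(M x_1, M x_2) = \langle N x_1, x_2 \rangle$), and $s := r\|N\|$. When $(\bh, M, r)$ nearly realizes the infimum in \eqref{def:distance}, the squared distance is $\max_j\|\phi_j\|_{p_j}^2 + \|N\|^2 + s^2$, so the task becomes producing a Young deficit bounded below by a uniform multiple of this quantity.

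\emph{Reduction and normalization.} First transport the qualitative Heisenberg stability theorem (Theorem \ref{thm: Christ qualitative}) across the correspondence between $\T_{\H^d}$ and $\T$ developed in Section \ref{sec: Reduce} to restrict to the regime in which some $(\bh, M, r)\in\tilde{\scriptO}(\bf, A, b)$ has $\max_j\|\phi_j\|_{p_j}$, $\|N\|$, and $s$ all below a threshold $\eta(K)$ depending only on the compact set $K$; in the complementary regime a fixed positive deficit from \eqref{eq: optimal Young's H^d} handles the claim. Inside the near-maximizer regime, exploit the $\G_0$-freedom in $\tilde{\scriptO}$ (the diagonal $Gl(2d)$ action and $t$-modulation, neither of which is a symmetry of $\T_{\H^d}$) to normalize the representative so that a balancing condition, described below, holds.

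\emph{Expansion.} Writing $\sigma(Mx_1, Mx_2) = \langle N x_1, x_2\rangle$, Taylor expand
\[ f_3(-z_1 - z_2 - e_{2d+1}\langle Nx_1, x_2\rangle) = f_3(-z_1 - z_2) - (\partial_t f_3)(-z_1 - z_2)\langle N x_1, x_2\rangle + O(\|N\|^2 |x_1|^2 |x_2|^2) \]
and $e^{ir\langle Nx_1, x_2\rangle} = 1 + ir\langle Nx_1, x_2\rangle + O(s^2 |x_1|^2 |x_2|^2)$. Multiplying and integrating yields
\[ \T(\bh, M, r) = \T_{\R^{2d+1}}(\bh) + L_1(\bh)[N] + L_2(\bh)[rN] + Q(\bh)[N, rN] + \mathrm{Error}, \]
where each $L_i$ is linear and $Q$ is quadratic in its paired parameter; at $\bh = \bg$ these reduce to explicit Gaussian moment integrals computed in Sections \ref{sec:Expansion in Group Structure} and \ref{sec:Expansion in Twisting Factor}. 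For the Euclidean piece, writing $\bh = \bg + \phi$ and applying Theorem \ref{thm:ChristSY} furnishes a deficit of size $\gtrsim \max_j\|\phi_j\|_{p_j}^2 \prod_j\|h_j\|_{p_j}$, possibly after a small inner adjustment of $\bg$ within its Euclidean orbit (Section \ref{sec:convolution term}). The $\mathrm{Error}$ is absorbable by the smallness of all parameters.

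\emph{Main obstacle.} The delicate point is that the linear cross-terms $L_1(\bg + \phi)[N]$ and $L_2(\bg + \phi)[rN]$, when paired against $\phi$, could in principle cancel the positive quadratic-in-$(N, rN)$ contribution from $Q(\bg)[N, rN]$. The balancing lemma planned for Section \ref{sec:Balancing Lemma} uses the $\G_0$-normalization freedom to force $N$ and $rN$ into the orthogonal complement, inside the space of antisymmetric $(2d)\times(2d)$ matrices, of the linear functionals against which $\phi$ couples in $L_1, L_2$. Once balancing is in place, completing the square produces a quadratic form in $(\phi, N, s)$ that is positive-definite uniformly for $\bp \in K$; Section \ref{sec:main proof} combines this with the Euclidean deficit to conclude. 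The central technical challenge is establishing this coercivity uniformly in $\bp$ despite the $L^{p_j}$ norms on $\phi$ not arising from an inner product — a difficulty handled precisely by invoking Christ's sharpened Young inequality in Section \ref{sec:convolution term}.
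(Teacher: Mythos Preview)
Your overall architecture matches the paper's: reduce to small perturbations via the qualitative result, expand $\T$ around $(\bg,0,0)$, and combine with the Euclidean deficit. But there are two genuine gaps.

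First, your claim that the error ``is absorbable by the smallness of all parameters'' fails for the terms that are quadratic in $\phi$ and first-order in $N$ --- in the paper's notation, $T'(f_1,f_2,g_3)$ and its permutations. The paper explicitly exhibits a counterexample (bump functions translated to $\pm(\lambda,\dots,\lambda)$ with $\lambda\to\infty$) showing these are \emph{not} $O(\|N\|\cdot\|\phi\|_\bp^2)$; the moment weights $|x_1||x_2|$ coming from $\sigma$ cannot be absorbed by $g_3$ alone when both $f_1,f_2$ sit far from the origin. The remedy is the decomposition $f_j=f_{j,\sharp}+f_{j,\flat}$ with $|f_{j,\sharp}|\le\eta g_j$: the sharp pieces, being dominated by Gaussians, do yield $o((\|\phi\|_\bp+\|N\|)^2)$ (Lemmas \ref{lemma: T' 1g 2f} and \ref{lemma: T'' 1g 2f}), while the flat pieces contribute uncontrolled $O(\|\bf_\sharp\|_\bp\|\bf_\flat\|_\bp+\|\bf_\flat\|_\bp^2)$ terms that must be paid for elsewhere. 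Your outline omits this decomposition entirely.

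Second, and relatedly, your Balancing Lemma is aimed at the wrong target. The cross-terms you worry about --- $L_i(\bg+\phi)[N]$ linear in both $\phi$ and $N$ --- are already $O(\|N\|^2\|\phi\|_\bp)$ by direct computation (Lemmas \ref{lemma: T' 2g 1f} and \ref{lemma: T'' 2g 1f}; the part linear in $N$ with no $\phi$ vanishes by the antisymmetry identity of Lemma \ref{lemma:onesigma}), so no balancing is required for them. Instead, the paper's Balancing Lemma uses both $\G_0$ and $\G_1$ to normalize $\phi$ so that it satisfies Hermite-type orthogonality conditions against $P_\alpha^{(\tau_j)}g_j^{p_j-1}$. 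These conditions are the hypotheses of an \emph{intermediate} form of Christ's sharpened Young inequality (Theorem \ref{thm:Christ intermediate}), which delivers not only the deficit $-c\|\phi\|_\bp^2$ but also an additional term $-\tilde c\sum_j\|f_{j,\flat}\|_{p_j}^{p_j}$. Since each $p_j<2$, this extra negative term is what absorbs the flat contributions left over from the first issue. Invoking only the final Theorem \ref{thm:ChristSY} for $\R^{2d+1}$, as you propose, does not supply this term, and the argument would not close.
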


\begin{proof}[Proof of Theorem \ref{thm: main distance} $\Rightarrow$ Theorem \ref{thm: main stability}]
Suppose that $|\T_{\H^d}(\bf)|\geq(1-\delta)\bA_\bp^{2d+1}\prod_j\|f_j\|_{p_j}$. Since $\T_{\H^d}(\bf)=\T(\bf,Id,0)$, by Theorem \ref{thm: main distance}, we have
\begin{equation*}
\dist_\bp(\tilde{\scriptO}(\bf,Id,0),(\bg,0,0))<C\delta^{1/2}
\end{equation*}
for a constant $C$ independent of $\bf$.

By the definitions of $\tilde{\scriptO}(\bf,Id,0)$ and the distance function, there exist $\Psi\in \G_1$, $b\in\R$, and $A\in Gl(2d)$ such that

\begin{equation}\label{eq:by def of orbit}
\|e^{ibt}(\Psi f_j)\circ A-g_j\|=O(\delta^{1/2})\|g_j\|_{p_j},
\end{equation}
where $\|A^TJA\|=O(\delta^{1/2})$ and $b\|A^TJA\|=O(\delta^{1/2})$. Observe that since $\Psi$ acts on $(\bf,Id,0)$, one may view $\Psi$ as an element of $\Sigma(\T_{\H^d})$ (and we do so here in interpreting $\Psi f_j$). Noting that
\begin{equation}\label{eq: equal measurements}
\|A^TJA\|=\inf_{S\in Sp(2d)}\|S^{-1}\circ A\|^2,
\end{equation}
choose $S$ such that equality is attained and write $L=S^{-1}\circ A$.

By \eqref{eq:by def of orbit},

\begin{equation*}
\|f_j-\Psi^{-1}(e^{-ibt}g_j\circ L^{-1})\circ S^{-1}\|_{p_j}=O(\delta^{1/2})\|\Psi^{-1}(e^{-ibt}g_j\circ L^{-1})\circ S^{-1}\|_{p_j}=O(\delta^{1/2})\|f_j\|_{p_j}.
\end{equation*}
By \eqref{eq: equal measurements}, $\|L^{-1}\|^2=O(\delta^{1/2})$ and $b\|L^{-1}\|^2=O(\delta^{1/2})$, precisely the conclusion of Theorem \ref{thm: main stability}  taking $a=1$ in the definition of $\epsilon$-diffuse Gaussian.

\end{proof}

To prove Theorem \ref{thm: main distance}, we begin by reducing to small perturbations.

\begin{theorem}\label{thm: qualitative, distance version}
Let $d\geq 1$ and $K\subset(1,2)^3$ be a compact set of admissible ordered triples of exponents. Then, there exists a function $\delta\mapsto\epsilon(\delta)$ satisfying $\lim_{\delta\to0}\epsilon(\delta)=0$ with the following property. If $\bp\in K$, $\bf\in L^\bp(\R^{2d+1})$, $A$ is a $(2d)\times(2d)$ matrix, and $b\in\R$ such that $|\T(\bf,A,b)|\geq(1-\delta)\bA_\bp^{2d+1}\prod_j\|f_j\|_{p_j}$, then
\begin{equation*}
\dist_\bp(\tilde{\scriptO}(\bf,A,b),(\bg,0,0))<\epsilon(\delta).
\end{equation*}
\end{theorem}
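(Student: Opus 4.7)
The plan is to deduce this qualitative result from Christ's qualitative stability theorem on the Heisenberg group (Theorem \ref{thm: Christ qualitative}) via a contradiction-and-compactness argument, using the $\G_0$ symmetries to convert a near-maximizer of $\T(\cdot, A, b)$ into a near-maximizer of $\T_{\H^d}$. Specifically, when $A \in Gl(2d)$, the $Gl(2d)$ symmetry with $L = A^{-1}$ together with the $t$-modulation with $\xi_{2d+1} = -b$---both in $\G_0$---send $(\bf, A, b)$ to $(\bf', Id, 0)$, where $\bf' := (e^{-ibt}\bf)\circ A^{-1}$, without changing $\Phi$; since $\T(\cdot, Id, 0) = \T_{\H^d}$, the triple $\bf'$ is a $(1-\delta)$ near-maximizer of convolution on $\H^d$.

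Proceeding by contradiction, I would suppose there exist $\epsilon_0 > 0$ and a sequence $(\bp_n, \bf_n, A_n, b_n)$ with $\delta_n \to 0$ satisfying the near-maximization hypothesis yet $\dist_{\bp_n}(\tilde{\scriptO}(\bf_n, A_n, b_n), (\bg, 0, 0)) \geq \epsilon_0$. After normalizing $\|f_{j,n}\|_{p_{j,n}} = 1$ via scaling in $\G_1$ and passing to a subsequence with $\bp_n \to \bp_\infty$ by compactness of $K$, applying Theorem \ref{thm: Christ qualitative} to $\bf_n'$ yields $\Psi_{1,n} \in \Sigma(\T_{\H^d}) \cong \G_1$ and a $\bp_n$-compatible canonical $\epsilon_n$-diffuse Gaussian triple $\tilde G_{j,n}(x,t) = e^{-\gamma_{j,n}|L_n x|^2 - \gamma_{j,n} a_n t^2 + ib_n' t}$ with $\epsilon_n \to 0$ and $\|\Psi_{1,n}^{-1} f_{j,n}' - \tilde G_{j,n}\|_{p_{j,n}} \to 0$. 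I would then produce an orbit element $(\bh_n, M_n, r_n) \in \tilde{\scriptO}(\bf_n, A_n, b_n)$ witnessing small distance by first absorbing a dilation $D_{s_n} \in \G_1$ with $s_n = a_n^{-1/4}$ into $\Psi_{1,n}$ (normalizing the $t^2$-coefficient to $\gamma_{j,n}$), and then taking $\Psi_{0,n} \in \G_0$ to be the $Gl(2d)$-action with $K_n = (s_n L_n)^{-1}$ followed by $t$-modulation with $\eta_n = -s_n^2 b_n'$. The canonical Gaussian is then sent exactly to $\bg_n$, so $\bh_n$ agrees with $g_{j,n}$ up to the approximation error from Christ's theorem, while the $\epsilon_n$-diffuse condition $\max\{a_n, a_n^{1/2}, |b_n'|\}\|L_n^{-1}\| \to 0$ should force both $\|M_n^T J M_n\|$ and $|r_n|\|M_n^T J M_n\|$ to zero, contradicting $\dist_{\bp_n} \geq \epsilon_0$.

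The principal obstacle will be controlling the normalization parameter $s_n$: the $\epsilon$-diffuse condition constrains only the joint quantity $\max\{a_n, a_n^{1/2}, |b_n'|\}\|L_n^{-1}\|$, so in the regime where $a_n$ vanishes while $\|L_n^{-1}\|$ does not, the choice $s_n = a_n^{-1/4}$ can inflate both $\|(s_n L_n)^{-1}\|$ and the $L^{p_{j,n}}$-Jacobian factors needed to transfer the approximation error to $\bh_n$. Overcoming this should be possible by exploiting the additional flexibility afforded by $\G_1$---for example by using further dilations to fix a canonical scale of the diffuse Gaussian before invoking Christ's theorem, combined with the norm identity $\|\tilde G_{j,n}\|_{p_{j,n}}^{p_{j,n}} \asymp (\det L_n)^{-1} a_n^{-1/2}$ forced by $\|f_{j,n}'\|_{p_{j,n}} = 1$---though a delicate case analysis may be required. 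A secondary concern is the case of non-invertible $A$, which lies outside the scope of the $\G_0$ reduction and should be handled separately, perhaps by a perturbation argument or by appealing to the Euclidean stability theorem (Theorem \ref{thm:ChristSY}) in the limiting regime.
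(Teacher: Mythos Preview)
Your approach is essentially the paper's: reduce to invertible $A$ by approximation, pass to $(\bF,Id,0)=((e^{-ibt}\bf)\circ A^{-1},Id,0)$ via the $\G_0$ symmetries so that Theorem~\ref{thm: Christ qualitative} applies to $\bF$, and then unwind the resulting $\bp$-compatible diffuse Gaussian $\Psi(e^{irt}g_j\circ L)$ into an explicit orbit element $(e^{-irt}(\Psi^{-1}\bF)\circ L^{-1},\,S_0^{-1}L^{-1},\,-r)$ witnessing small distance. The paper dispenses with both of your added complications: it argues directly rather than by contradiction and compactness (Christ's $\epsilon(\delta)$ is already uniform over $\bp\in K$, so no subsequences are needed), and it handles your ``principal obstacle'' in a single parenthetical---since dilation already lies in $\Sigma(\T_{\H^d})\cong\G_1$ it is simply absorbed into $\Psi$ before any $\G_0$ move, the paper writes ``taking $a=1$ via the dilation symmetry,'' records $\max\{1,r\}\|L^{-1}\|^2<\epsilon(\delta)$ from the diffuse condition, and the bound $\dist_\bp^2\le C\epsilon(\delta)^2+(1+r^2)\|L^{-1}\|^4\le C\epsilon(\delta)^2$ follows in two lines, with no case analysis or norm identities.
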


Theorem \ref{thm: qualitative, distance version} is a qualitative stability result phrased in terms of $\T(\bf,A,b)$ and the corresponding distance function rather than convolution on $\H^d$. We will prove it by translating the following qualitative stability result of Christ \cite{ChristHeisenberg} into this scheme.

\begin{theorem}\label{thm: Christ qualitative}
Let $d\geq 1$ and $K\subset(1,2)^3$ be a compact set of admissible ordered triples of exponents. Then, there exists a function $\delta\mapsto\epsilon(\delta)$ satisfying $\lim_{\delta\to0}\epsilon(\delta)=0$ with the following property. Let $\bp\in K$, $\bf\in L^\bp(\H^d)$, and suppose that $\|f_j\|_{p_j}\neq0$ for each $j\in\{1,2,3\}$. Let $\delta\in(0,1)$ and suppose that $|\T_{\H^d}(\bf)|\geq(1-\delta)\bA_\bp^{2d+1}\prod_j\|f_j\|_{p_j}$. Then there exists a $\bp$-compatible $\epsilon(\delta)$-diffuse ordered triple of Gaussians $\bG=(G_1,G_2,G_3)$ such that
\begin{equation*}
\|f_j-G_j\|_{p_j}<\epsilon(\delta)\|f_j\|_{p_j}\text{    for }j\in\{1,2,3\}.
\end{equation*}
\end{theorem}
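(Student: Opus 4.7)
Theorem \ref{thm: Christ qualitative} is an antecedent result of Christ used as input to the present paper; accordingly, I propose how one might establish it from scratch. The principal difficulty is that convolution on $\H^d$ admits no maximizers, only maximizing sequences that drift off along the symmetries missing from $\Sigma(\T_{\H^d})$, namely dilation mismatches between $x$ and $t$, modulation in $t$, and $Gl(2d+1)$ actions mixing $x$ with $t$. Any proof must therefore allow the limiting object to be a diffuse Gaussian rather than a true maximizer.

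My plan is a contradiction argument driven by the Euclidean stability theorem (Theorem \ref{thm:ChristSY}). Suppose the conclusion fails: there is a sequence $\bf_n \in L^{\bp_n}(\H^d)$ with $\bp_n \in K$, normalized to $\|f_{j,n}\|_{p_j} = 1$, satisfying $|\T_{\H^d}(\bf_n)| \to \bA_{\bp_n}^{2d+1}$, yet bounded away (modulo $\Sigma(\T_{\H^d})$) from every $\bp_n$-compatible $\epsilon$-diffuse Gaussian for some fixed $\epsilon > 0$. By compactness of $K$ one may pass to a subsequence with $\bp_n \to \bp_\infty$. The key observation is that the Heisenberg and Euclidean operators share the same optimal constant, so near-maximization of $|\T_{\H^d}(\bf_n)|$ both forces the twist $e^{i\sigma(x_1, x_2)\cdot(\cdot)}$ to produce essentially no cancellation and makes $\bf_n$ near-maximizing for the Euclidean operator $\T_{\R^{2d+1}}$.

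Applying Theorem \ref{thm:ChristSY} yields Euclidean symmetries $\Psi_n^{\mathrm{Eucl}}$, including $t$-modulations and the full $Gl(2d+1)$ action not present in $\Sigma(\T_{\H^d})$, bringing $\bf_n$ within $L^\bp$-distance $o(1)$ of $\bg$. The crux is to decompose $\Psi_n^{\mathrm{Eucl}} = \Psi_n^{\mathrm{Heis}} \circ \Psi_n^{\mathrm{miss}}$ with $\Psi_n^{\mathrm{Heis}} \in \Sigma(\T_{\H^d})$ and $\Psi_n^{\mathrm{miss}}$ composed of the missing Euclidean symmetries: a $t$-dilation mismatch producing the parameter $a$ in the canonical form $e^{-|Lx|^2 - at^2 + ibt}$, a $t$-modulation producing $b$, and an $x$-only $Gl(2d)$ factor producing $L$. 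The resulting Gaussian $\bG_n = \Psi_n^{\mathrm{Heis}} \tilde\bG_n^{\mathrm{canon}}$ is $\bp_n$-compatible and close to $\bf_n$ by construction.

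The main obstacle is showing that the required missing-symmetry correction is quantitatively small, i.e., $\max\{a_n, \sqrt{a_n}, b_n\}\|L_n^{-1}\| \to 0$. This should follow from the near-triviality of the Heisenberg twist, which concentrates the $t$-Fourier support of each $f_{j,n}$ to scales consistent with $L_n$, precisely the diffuseness condition; absent this concentration, the twist would contribute nontrivial cancellation and prevent $|\T_{\H^d}(\bf_n)|$ from saturating. Closing this estimate delivers a $\bG_n$ contradicting the bounded-away hypothesis, completing the argument. An alternative route, bypassing the direct reduction to $\R^{2d+1}$, would slice $\T_{\H^d}$ in the $t$-frequency variable (reducing to a family of twisted convolutions on $\R^{2d}$) and apply stability for twisted convolution of the kind developed in \cite{MeTC}; either route encounters the same diffuseness/symmetry bookkeeping as the main technical bottleneck.
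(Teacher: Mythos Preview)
The paper does not prove Theorem~\ref{thm: Christ qualitative}; it is quoted from \cite{ChristHeisenberg}, with only the remark that uniformity over $\bp\in K$ can be read off Christ's argument. You correctly note this, so your sketch is competing with Christ's original proof rather than anything in the present paper.

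Your primary route has a genuine gap at the step ``near-maximization of $|\T_{\H^d}(\bf_n)|$ \ldots\ makes $\bf_n$ near-maximizing for the Euclidean operator $\T_{\R^{2d+1}}$.'' There is no inequality $|\T_{\H^d}(\bf)|\le |\T_{\R^{2d+1}}(\bf)|$ for the \emph{same} triple: the two forms evaluate $f_3$ at different arguments, and the Heisenberg form carries no oscillatory factor $e^{i\sigma(\cdot,\cdot)}$ whose ``lack of cancellation'' could be invoked directly---that factor appears only \emph{after} a $t$-Fourier transform. The standard way to compare them, symmetric-decreasing rearrangement of each $f_j(x,\cdot)$ in $t$ (which kills the $\sigma$-shift by Riesz in one dimension), yields $\T_{\H^d}(|\bf|)\le \T_{\R^{2d+1}}(\bf^*)$, but then Euclidean stability constrains only $\bf^*$, and passing back to $\bf$ demands a separate stability theorem for one-dimensional rearrangement. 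That is a substantial additional input, not a formality. A secondary gap: the factorization $\Psi_n^{\mathrm{Eucl}}=\Psi_n^{\mathrm{Heis}}\circ\Psi_n^{\mathrm{miss}}$ must absorb $Gl(2d+1)$ elements shearing $t$ into the $x$-coordinates, which lie neither in $\Sigma(\T_{\H^d})$ nor in the canonical form $e^{-|Lx|^2-at^2+ibt}$; their smallness needs its own argument.

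Your parenthetical alternative---Fourier transform in $t$ to write $\T_{\H^d}$ as an integral over the frequency $\tau$ of twisted convolutions on $\R^{2d}$, then combine the sharp twisted-convolution bound with H\"older in $\tau$---is essentially the skeleton of \cite{ChristHeisenberg}, and the diffuseness condition emerges naturally from near-equality in the $\tau$-integration. The route you foreground has the hole; the one you relegate to an aside is the one that works.
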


While the version of Theorem \ref{thm: Christ qualitative} stated in \cite{ChristHeisenberg} does not explicitly include the uniformity of $\epsilon(\delta)$ for $\bp\in K$, one may easily check this part of the conclusion is satisfied by reviewing the proof.

\begin{proof}[Proof of Theorem \ref{thm: Christ qualitative} $\Rightarrow$ Theorem \ref{thm: qualitative, distance version}]
By a standard approximation argument, it suffices to prove Theorem \ref{thm: qualitative, distance version} in the case of invertible matrices $A$, as each noninvertible matrix is arbitrarily close to an invertible matrix and $\T(\bf,A,b)$ is continuous in $A$. (Furthermore, in this scenario, one may choose the distance of the invertible matrix to depend on $\bf$.)

Suppose $\Phi(\bf,A,b)\geq (1-\delta)\bA_\bp^{2d+1}$. Then, applying the symmetries of $\T$, $\Phi(\bF,Id,0)\geq (1-\delta)\bA_\bp^{2d+1}$, where $\bF=e^{-ibt}\bf\circ A^{-1}$.

We now write $z=(x,t)$. By Theorem \ref{thm: Christ qualitative} (taking $a=1$ via the dilation symmetry), there exist $\Psi\in\Sigma(\T_{\H^d}$, $r\in\R$, and $L\in Gl(2d)$ such that
\begin{equation}\label{eq:functions close}
\frac{\|F_j-\Psi(e^{irt}g_j\circ L)\|_{p_j}}{\|F_j\|_{p_j}}<\epsilon(\delta)
\end{equation}
where 
$\max\{1,r\}\cdot\|L^{-1}\|^2<\epsilon(\delta)$. By \eqref{eq:functions close},
\begin{equation}\label{eq:close to gj}
\frac{\|e^{-irt}\Psi^{-1}F_j\circ L^{-1}-g_j\|_{p_j}}{\|e^{-irt}\Psi^{-1}F_j\circ L^{-1}\|_{p_j}}<\epsilon(\delta).
\end{equation}

Let $S_0$ denote the composition of symplectic matrices found in the symmetries which generate $\Psi$; that is, the matrix $S$ such that for any $(\bh,M,s)$, $\Psi(\bh,M,s)=(\tilde{\bh},SM,\tilde{s})$ for some $\tilde{\bh}\in L^\bp$ and $\tilde{s}\in\R$. Then, $(e^{-irt}\Psi^{-1}F_j\circ L^{-1},S_0^{-1}\circ L^{-1},-r)\in\tilde{\scriptO}(\bf,A,b)$. Since $\Psi$ acts in the case where the attached parameters are $Id$ and $0$, we may rightfully view it as an element of $\G_1$.

We now see that 
\begin{align*}
\dist_\bp(\tilde{\scriptO}(\bf,A,b),(\bg,0,0))^2\leq&\max_j\|e^{-irt}(\Psi^1)^{-1}F_j\circ L^{-1}-g_j\|_{p_j}^2\\&+(1+r^2)\inf_{S\in Sp(2d)}\|S\circ S_0^{-1}\circ L^{-1}\|^4\\
\leq& C\epsilon(\delta)^2+(1+r^2)\|L^{-1}\|^4\\
\leq& C\epsilon(\delta)^2.
\end{align*}

(In the above, we implicitly used the fact that \eqref{eq:close to gj} implies $\|e^{-irt}(\Psi^{-1}F_j)\circ L^{-1}\|_{p_j}$ is comparable to $\|g_j\|_{p_j}$.)

\end{proof}

\section{Expansion in Group Structure}\label{sec:Expansion in Group Structure}

By the translation scheme developed in Section \ref{sec: Reduce}, it suffices to prove Theorem \ref{thm: main distance}, and by Theorem \ref{thm: qualitative, distance version} it suffices to prove it under the assumption of small perturbations.

From here on, let $\bf\in L^\bp(\R^{2d+1})$ denote small, perturbative terms. For this section and the next, fix $b\in\R$ and a $(2d)\times(2d)$ matrix $A$. For $\bh\in L^\bp$, define $\|\bh\|_\bp:=\max_j\|h_j\|_{p_j}$. Our main object of interest is $\Phi(\bg+\bf,A,b)$. We write

\begin{multline}\label{eq:main expansion}
\frac{\T(\bg+\bf,A,b)}{\prod_j\|g_j+f_j\|_{p_j}}=\frac{\T(\bg+\bf,A,b)-\T(\bg+\bf,A,0)}{\prod_j\|g_j+f_j\|_{p_j}}\\+\frac{\T(\bg+\bf,A,0)-\T(\bg+\bf,0,0)}{\prod_j\|g_j+f_j\|_{p_j}}+\frac{\T(\bg+\bf,0,0)}{\prod_j\|g_j+f_j\|_{p_j}}
\end{multline}
and analyze each of the terms in the expansion in this and the following two sections. Control of the $\frac{\T(\bg+\bf,0,0)}{\prod_j\|g_j+f_j\|_{p_j}}$ term will follow partially from the analysis of \cite{ChristSY} and will be addressed in Section \ref{sec:Balancing Lemma}. Control of the other two terms on the right hand side will follow from a trilinear expansion in the function imputs and analysis similar to that of \cite{MeTC}.

Upon performing this expansion, the third-order terms will behave in a mildly unexpected manner. Specifically, they will be shown to not be $O(\|\bf\|_\bp^2\|A^TJA\|)$ through an example which involves the two $f_j$ moving out to infinity in opposite directions while minimizing the amount of cancellation. For this reason, it will be helpful to split the $f_j$ into pieces near to and far from the origin. The near terms will be analyzed immediately, while the far terms will be addressed later.

As in \cite{ChristHY} and \cite{ChristSY}, let $\eta>0$ be a small parameter to be chosen later (see Theorem \ref{thm:Christ intermediate}). For each $1\leq j\leq3$, decompose $f_j=f_{j,\sharp}+f_{j,\flat}$, where
\begin{equation}\label{eq:def fsharp}
f_{j,\sharp}=\left\{
     \begin{array}{lr}
       f_j(x) \text{ if }|f_j(x)|\leq \eta g_j(x)\\
       0 \text{ otherwise, }
     \end{array}
   \right.
\end{equation}
and $f_{j,\flat}=f_j-f_{j,\sharp}$.

The main result of this section is the following:

\begin{proposition}\label{prop:expand in group structure}
Let $d\geq 1$ and $\bp$ be an admissible triple of exponents. Then, there exists constant $C>0$ such that
\begin{multline*}
\frac{\T(\bg+\bf,A,0)-\T(\bg+\bf,0,0)}{\prod_j\|g_j+f_j\|_{p_j}}\le-C\|A^TJA\|^2\\+o((\|f\|_\bp+\|A^TJA\|)^2)+O(\|\bf_\sharp\|_\bp\|\bf_\flat\|_\bp+\|\bf_\flat\|_\bp^2).
\end{multline*}
\end{proposition}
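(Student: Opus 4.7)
The plan is to expand $\T(\bg+\bf,A,0)-\T(\bg+\bf,0,0)$ in the small parameter $s:=\sigma(Ax_1,Ax_2)=x_1^TMx_2$, with $M=A^TJA$ antisymmetric, identify the leading quadratic-in-$s$ Gaussian contribution as $\le-C\|M\|^2$, and verify that all first-order-in-$s$ contributions either vanish identically or are of strictly higher order. I would split the bracket $(g_3+f_3)(-z_1-z_2-e_{2d+1}s)-(g_3+f_3)(-z_1-z_2)$ into its $g_3$- and $f_3$-pieces. For the $g_3$-piece (smooth in $t$) I would perform a second-order Taylor expansion in $s$ with integral remainder. For the $f_3$-piece, to avoid derivatives of a non-smooth function, I would apply the change of variables $\tilde t_2=t_2+\sigma(Ax_1,Ax_2)$ and transfer the shift to $g_2+f_2$, then split this transferred factor into a $g_2$-Taylor part and an $f_2$-shift-difference part.

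The crucial ingredient is the Gaussian antisymmetry identity: for every antisymmetric $(2d)\times(2d)$ matrix $M$ and every measurable $\phi$ for which the integral converges,
\begin{equation*}
\iint e^{-\gamma_1|x_1|^2}\,e^{-\gamma_2|x_2|^2}\,(x_1^TMx_2)\,\phi(-x_1-x_2)\,dx_1\,dx_2=0.
\end{equation*}
This follows by substituting $y=-x_1-x_2$, completing the square in $x_1$, and observing that the remaining first-moment Gaussian integral in $x_1$ produces a vector parallel to $y$, which pairs with $y$ via the antisymmetric $M$ to zero. Using this identity, every first-order-in-$s$ contribution whose integrand factors as $g_1(z_1)g_2(z_2)\cdot s\cdot\psi(t_1,t_2,-x_1-x_2)$ after splitting the Gaussians into their $x$- and $t$-marginals vanishes. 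This kills the pure Gaussian first-order term $-\iint g_1g_2\,s\,\partial_tg_3(-z_1-z_2)dz_1dz_2$, the linear-in-$\bf$ cross terms $\iint f_jg_k\,s\,\partial_tg_3(-z_1-z_2)dz_1dz_2$, and, critically, the dangerous term $2\gamma_2\iint g_1g_2\,st_2\,f_3(-z_1-z_2)dz_1dz_2$ arising from the $g_2$-Taylor expansion of the $f_3$-shift piece. Without this identity, the last term would contribute an $O(\|\bf\|_\bp\|M\|)$ quantity which cannot be absorbed into $o((\|\bf\|_\bp+\|M\|)^2)+O(\|\bf_\sharp\|_\bp\|\bf_\flat\|_\bp+\|\bf_\flat\|_\bp^2)$.

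For the main quadratic contribution I would compute $\tfrac12\iint g_1(z_1)g_2(z_2)s^2\partial_t^2g_3(-z_1-z_2)dz_1dz_2$ by factoring the integrand as (function of $x$) times (function of $t$). The $t$-integral $\iint(-2\gamma_3+4\gamma_3^2(t_1+t_2)^2)e^{-\gamma_1t_1^2-\gamma_2t_2^2-\gamma_3(t_1+t_2)^2}dt_1dt_2$ evaluates to $-2\gamma_1\gamma_2\gamma_3\pi/(\det Q)^{3/2}<0$ after completing squares, while the $x$-integral $\iint(x_1^TMx_2)^2e^{-\gamma_1|x_1|^2-\gamma_2|x_2|^2-\gamma_3|x_1+x_2|^2}dx_1dx_2$ is a positive-definite quadratic form in $M$, hence comparable to $\|A^TJA\|^2$. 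Multiplied together these produce the claimed $-C\|A^TJA\|^2$ leading term.

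The remaining terms to estimate are: (a) the first- and higher-order cross terms that escape the vanishing identity because the relevant $g_jg_k$ Gaussian pair is replaced by an $f_jf_k$ pair, such as $\iint f_1f_2\,s\,\partial_tg_3(-z)dz$; bounded by $|s|\le\|M\|\,|x_1||x_2|$ and Young's inequality, these are $O(\|\bf\|_\bp^a\|M\|^b)$ with $a+b\ge 3$ and $a\ge1$, hence $o((\|\bf\|_\bp+\|M\|)^2)$; (b) the Taylor remainder $O(|s|^3)$, contributing $O(\|M\|^3)=o(\|M\|^2)$; (c) the $f_2$-shift-difference piece $\iint(g_1+f_1)[f_2(x_2,t_2-s)-f_2(z_2)]f_3(-z)dz$, which is already quadratic in $\bf$: decomposing each $f_j=f_{j,\sharp}+f_{j,\flat}$, every contribution containing a $\flat$-factor is dominated by Young's inequality by $\|\bf_\sharp\|_\bp\|\bf_\flat\|_\bp+\|\bf_\flat\|_\bp^2$, while the purely $\sharp\sharp$ piece is controlled using $|f_{j,\sharp}|\le\eta g_j$ and $L^{p_2}$-continuity of translation to obtain a factor going to zero with $\|M\|$, yielding $o((\|\bf\|_\bp+\|A^TJA\|)^2)$. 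Finally, the denominator differs from $\prod_j\|g_j\|_{p_j}$ by $1+O(\|\bf\|_\bp)$, contributing only lower-order corrections to $-C\|A^TJA\|^2$. The main obstacle is establishing the Gaussian antisymmetry identity with enough generality to cover every first-order-in-$s$ term of order $\|\bf\|_\bp\|M\|$; once it is in hand, the remaining estimates reduce to bookkeeping via Young's inequality and the $\sharp/\flat$ decomposition.
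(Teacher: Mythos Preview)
Your overall architecture is close to the paper's, and your Gaussian antisymmetry identity is exactly Lemma~\ref{lemma:onesigma}. However, there is a genuine gap in item (a). You claim that terms such as
\[
\iint f_1(z_1)\,f_2(z_2)\,\sigma(Ax_1,Ax_2)\,\partial_t g_3(-z_1-z_2)\,dz_1\,dz_2
\]
are $O(\|\bf\|_\bp^2\,\|M\|)$ via $|s|\le\|M\|\,|x_1||x_2|$ and Young's inequality. This bound is false. The factors $|x_1|$ and $|x_2|$ cannot be absorbed: $g_3(-z_1-z_2)$ controls only $|x_1+x_2|$, not $|x_1|$ and $|x_2|$ separately, and $\||x_j|f_j\|_{p_j}$ is not dominated by $\|f_j\|_{p_j}$. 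The paper gives an explicit counterexample just before Lemma~\ref{lemma: T' 1g 2f}: take $f_1=\phi(\cdot-(\lambda,\dots,\lambda))$, $f_2=\phi(\cdot+(\lambda,\dots,\lambda))$ with $\phi$ a fixed bump; then $\|f_j\|_{p_j}$ stays bounded while the integral grows like $\lambda^2\|M\|$. The same obstruction hits the term $\iint f_1\,g_2\,s\,t_2\,f_3(-z)\,dz$ arising from your change-of-variables treatment of the $f_3$-piece, where $|x_1|$ again has nowhere to go.

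The paper's remedy (Lemma~\ref{lemma: T' 1g 2f}) is to apply the $\sharp/\flat$ decomposition to \emph{these} two-$f$ terms as well, not only to your item (c). For the $\sharp\sharp$ part one further splits $f_{j,\sharp}=f_{j,\sharp,\le M_j}+f_{j,\sharp,>M_j}$ with $M_j$ chosen so that $\|f_{j,\sharp,>M_j}\|_{p_j}=\|f_{j,\sharp}\|_{p_j}^2$; the pointwise bound $|f_{j,\sharp}|\le\eta g_j$ forces $M_j\le C\log(\|f_{j,\sharp}\|_{p_j}^{-1})$, so on the near part the dangerous $|x_j|$ factors are replaced by logarithms, yielding $o(\|\bf\|_\bp^2)$ after using $\|M\|^3<\|f_{1,\sharp}\|_{p_1}\|f_{2,\sharp}\|_{p_2}$ (the complementary case being handled by the trivial bound). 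Your item (c) argument via ``$L^{p_2}$-continuity of translation'' is also not quite enough as stated, since the shift $s=\sigma(Ax_1,Ax_2)$ is not a fixed translation but depends on $x_1,x_2$; the same $M_j$-cutoff machinery is what makes this rigorous.
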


The $o(\cdot)$ term will be deemed negligible by the reduction to small perturbations and the $O(\cdot)$ term will be counteracted by a negative term from our treatment of $\frac{\T(\bg+\bf,0,0)}{\prod_j\|g_j+f_j\|_{p_j}}$.

We begin by using the trilinearity of $\T$ to expand $\T(\bg+\bf,A,0)-\T(\bg+\bf,0,0)$ into 8 terms of the form

\begin{multline*}
T'(h_1,h_2,h_3):=\\
\iint h_1(z_1)h_2(z_2)\left[h_3(-z_1-z_2-e_{2d+1}\sigma(Ax_1,Ax_2))-h_3(-z_1-z_2)\right]dz_1dz_2,
\end{multline*}
where the $h_j$ are either all $g_j$, two $g_j$ and one $f_j$, one $g_j$ and two $f_j$, or all $f_j$. Since $\T(\cdot,A,0)$ may be written as the integral over the hypersurface $z_1\cdot_A z_2\cdot_A z_3=0$, the $h_j$ are interchangeable. For instance, bounds on $T'(f_1,g_2,g_3)$ immediately imply bounds on $T'(g_1,f_2,g_3)$ and $T'(g_1,g_2,f_3)$; similar implications hold for the case of one $g_j$ and two $f_j$.

The following two lemmas, proven in \cite{MeTC}, will be useful here. By minor abuse of notation, we let $g_j(w)=e^{-\gamma_j|w|^2}$ for $w\in\R, w\in\R^{2d}$, or $w\in\R^{2d+1}$.

\begin{lemma}\label{lemma:onesigma}
For all $f\in L^{p_1}(\R^{2d})$

\begin{equation*}
\iint_{\R^{2d}\times\R^{2d}} f(x)g_2(y)g_3(x+y)\sigma(Ax,Ay)dxdy=0
\end{equation*}
\end{lemma}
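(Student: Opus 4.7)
The plan is to compute the inner $y$-integral explicitly and then invoke the antisymmetry of the symplectic form. First I would rewrite $\sigma(Ax,Ay)=x^{T}A^{T}JAy=x^{T}By$, where $B:=A^{T}JA$ is antisymmetric because $J$ is. The key algebraic input is the identity $v^{T}Bv=0$ for every $v\in\R^{2d}$. With this notation the integral becomes
\begin{equation*}
\int_{\R^{2d}}f(x)\,x^{T}B\!\left(\int_{\R^{2d}}y\,g_{2}(y)g_{3}(x+y)\,dy\right)dx,
\end{equation*}
so it suffices to compute the inner first moment in $y$ and observe that it is a scalar multiple of $x$.

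To handle the inner integral, note that, as a function of $y$,
\begin{equation*}
g_{2}(y)g_{3}(x+y)=\exp\bigl(-(\gamma_{2}+\gamma_{3})|y|^{2}-2\gamma_{3}\,x\cdot y-\gamma_{3}|x|^{2}\bigr)
\end{equation*}
is a Gaussian centered at $y_{*}=-cx$, where $c:=\gamma_{3}/(\gamma_{2}+\gamma_{3})$. Completing the square and using the standard first-moment formula for a centered Gaussian,
\begin{equation*}
\int_{\R^{2d}}y\,g_{2}(y)g_{3}(x+y)\,dy=-c\,x\cdot M(x),
\end{equation*}
where $M(x):=\int_{\R^{2d}} g_{2}(y)g_{3}(x+y)\,dy$ is itself a Gaussian in $x$. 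Plugging back in, the outer integrand reduces to $-c\,M(x)\,f(x)\,x^{T}Bx$, which vanishes pointwise by antisymmetry of $B$.

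Fubini is justified because $f\in L^{p_{1}}$ and the Schwartz factor $|x|\,M(x)$ lies in every $L^{q}$, in particular in $L^{p_{1}'}$, so the full integrand $f(x)g_{2}(y)g_{3}(x+y)\sigma(Ax,Ay)$ is absolutely integrable on $\R^{2d}\times\R^{2d}$. The argument presents no real obstacle: its entire content is that the first moment in $y$ of a Gaussian centered at $-cx$ is parallel to $x$, after which the antisymmetry of $A^{T}JA$ forces the result to be zero independent of the choice of $f$.
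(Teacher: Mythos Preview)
Your argument is correct. The key observation---that the Gaussian $g_{2}(y)g_{3}(x+y)$ has its center along the line through $x$, so its first moment in $y$ is a scalar multiple of $x$, whence the antisymmetry of $B=A^{T}JA$ kills the integrand pointwise---is exactly the right mechanism, and your Fubini justification is adequate (though strictly speaking one bounds $\int |y|\,g_{2}(y)g_{3}(x+y)\,dy$ rather than $M(x)$ itself; this is still Schwartz in $x$, so nothing changes).

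For comparison: the paper does not prove this lemma in-line but cites \cite{MeTC}, so there is no proof here to compare against directly. Your argument is in any case the natural one, and presumably close in spirit to what appears there.
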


\begin{lemma}\label{lemma:twosigma}
For $\bg$ as above,
\begin{equation*}
\iint_{\R^{2d}\times\R^{2d}} g_1(x)g_2(y)g_3(x+y)\sigma^2(Ax,Ay)dxdy\approx_{d,\bp}\|A^TJA\|^2.
\end{equation*}
\end{lemma}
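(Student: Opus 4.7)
The plan is to evaluate the integral directly as a Gaussian moment computation. First, I would set $M := A^T J A$, which is antisymmetric since $J$ is, and rewrite $\sigma(Ax, Ay) = x^T M y$. Expanding the square gives
\begin{equation*}
\sigma^2(Ax, Ay) = \sum_{i,j,k,\ell = 1}^{2d} M_{ij} M_{k\ell}\, x_i x_k y_j y_\ell.
\end{equation*}
The Gaussian factor $g_1(x) g_2(y) g_3(x+y) = e^{-Q(x, y)}$, where $Q(x,y) = (\gamma_1 + \gamma_3) |x|^2 + (\gamma_2 + \gamma_3) |y|^2 + 2 \gamma_3\, x \cdot y$, is a positive definite quadratic form on $\R^{4d}$; its Hessian has block-scalar structure with determinant a power of $\Delta := \gamma_1 \gamma_2 + \gamma_1 \gamma_3 + \gamma_2 \gamma_3$ and an easily computable inverse, yielding explicit covariances depending only on $\bp$.

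Next, I would invoke Wick's/Isserlis' theorem on the centered Gaussian proportional to $e^{-Q}$ to evaluate the fourth moment $\iint x_i x_k y_j y_\ell\, e^{-Q}\, dx\, dy$ as a sum of three pairings. Substituting the covariances and summing against $M_{ij} M_{k\ell}$, the integral decomposes as a linear combination of $\|M\|_F^2$, $(\mathrm{tr}\, M)^2$, and $\mathrm{tr}(M^2)$, with explicit coefficients depending only on $d$ and $\bp$.

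The crux, and really the only step that is more than bookkeeping, is the cancellation that exploits the antisymmetry of $M$: we have $\mathrm{tr}\, M = 0$ and $\mathrm{tr}(M^2) = -\|M\|_F^2$, while the algebraic identity $(\gamma_1 + \gamma_3)(\gamma_2 + \gamma_3) - \gamma_3^2 = \Delta$ makes the surviving coefficient explicit and strictly positive. After this simplification the integral equals $c_{d, \bp}\, \|M\|_F^2$ with $c_{d, \bp} > 0$, and equivalence of norms on the finite-dimensional space of $2d \times 2d$ matrices gives $\|M\|_F^2 \asymp \|M\|^2$, yielding the stated two-sided bound. The main obstacle is really just being careful with the three Wick contractions, since a priori they could produce contributions linearly independent of $\|M\|^2$; the antisymmetry of $M = A^T J A$ is precisely what forces the clean answer.
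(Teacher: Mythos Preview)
The paper does not actually prove this lemma; it is quoted from \cite{MeTC} (``The following two lemmas, proven in \cite{MeTC}, will be useful here''), so there is no in-paper proof to compare against. That said, your proposal is correct and is the natural way to establish the result. The Wick expansion of $\iint x_i x_k y_j y_\ell\, e^{-Q}\,dx\,dy$ with the block-scalar covariance you describe produces exactly the three contractions you list, and summing against $M_{ij}M_{k\ell}$ yields a positive multiple of $(\gamma_1+\gamma_3)(\gamma_2+\gamma_3)\|M\|_F^2 + \gamma_3^2\bigl[(\operatorname{tr} M)^2 + \operatorname{tr}(M^2)\bigr]$ up to a common factor; antisymmetry of $M=A^TJA$ then collapses this to a positive constant times $\|M\|_F^2$ via the identity $(\gamma_1+\gamma_3)(\gamma_2+\gamma_3)-\gamma_3^2=\Delta>0$, exactly as you say. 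Norm equivalence finishes the two-sided bound. There is no gap.
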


%
%

The following three lemmas will address the expansion in the current paper.

\begin{lemma}\label{lemma: T' 3g}
$T'(g_1,g_2,g_3)\leq -C_{d,\bp}\|A^TJA\|^2+O(\|A^TJA\|^3)$
\end{lemma}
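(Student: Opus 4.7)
The plan is to compute $T'(g_1, g_2, g_3)$ by explicit Gaussian integration, Taylor-expanding the integrand in $s := \sigma(Ax_1, Ax_2)$ about $s = 0$. Writing $z_j = (x_j, t_j)$ and $u = t_1 + t_2$, and using $g_3(z) = e^{-\gamma_3|z|^2}$, the quantity $g_3(-z_1-z_2-e_{2d+1}s) - g_3(-z_1-z_2)$ equals $e^{-\gamma_3|x_1+x_2|^2 - \gamma_3 u^2}\bigl(e^{-2\gamma_3 u s - \gamma_3 s^2} - 1\bigr)$. Since $|s| \leq \|A^TJA\| \cdot |x_1||x_2|$ and Gaussian moments in $x_1, x_2$ are finite, I would Taylor-expand the inner exponential to second order in $s$, collect like powers of $s$, and bound the cubic tail.

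Next, the main content is to isolate the linear and quadratic terms and to separate the $dt_1\,dt_2$ integration from the $dx_1\,dx_2$ integration (legitimate since $s$ depends only on $x_1, x_2$). The linear-in-$s$ piece $-2\gamma_3 u s$ vanishes upon $(t_1,t_2)$-integration, because the Gaussian $e^{-\gamma_1 t_1^2 - \gamma_2 t_2^2 - \gamma_3 u^2}$ is even under $(t_1,t_2) \mapsto -(t_1,t_2)$ while $u$ is odd. For the $s^2$ contribution $(-\gamma_3 + 2\gamma_3^2 u^2)\,s^2$, a short Gaussian moment computation, using $\int e^{-\gamma_1 t_1^2 - \gamma_2 t_2^2 - \gamma_3 u^2}\,dt_1\,dt_2 = \pi/\sqrt{\Delta}$ and $E[u^2] = (\gamma_1+\gamma_2)/(2\Delta)$ with $\Delta = \gamma_1\gamma_2 + \gamma_1\gamma_3 + \gamma_2\gamma_3$, reduces the $(t_1,t_2)$-integral to $-\pi\gamma_1\gamma_2\gamma_3/\Delta^{3/2}$, a strictly negative constant depending only on $\bp$. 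The remaining integral
\[
\iint e^{-\gamma_1|x_1|^2 - \gamma_2|x_2|^2 - \gamma_3|x_1+x_2|^2}\,\sigma^2(Ax_1, Ax_2)\,dx_1\,dx_2
\]
is precisely the object bounded in Lemma \ref{lemma:twosigma}, giving $\approx_{d,\bp} \|A^TJA\|^2$. Combined, these yield the main term $-C_{d,\bp}\|A^TJA\|^2$ with $C_{d,\bp} > 0$.

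Finally, for the cubic remainder, each monomial of the form $u^a s^b$ ($b \geq 3$) appearing in the Taylor tail of $e^X - 1$ beyond quadratic order contributes $O(\|A^TJA\|^b)$ after bounding $|s|^b \leq \|A^TJA\|^b |x_1|^b |x_2|^b$ and using finite Gaussian moments in both $u$ and $x_1, x_2$. The regime where $|s|$ is large (so $|x_1||x_2| \gg \|A^TJA\|^{-1}$) can be handled by a direct estimate exploiting the Gaussian decay of $g_1 g_2$, which forces the effective expansion region to have bounded $|X|$ and renders the remainder genuinely cubic. The one potential obstacle is to verify that the quadratic coefficient is \emph{strictly} negative rather than accidentally vanishing; this reduces to the clean identity $-\Delta + \gamma_3(\gamma_1+\gamma_2) = -\gamma_1\gamma_2$, and uniformity of $C_{d,\bp}$ over compact $K \subset (1,2)^3$ follows from continuity of $\gamma_j = \pi p_j'$ in $\bp$.
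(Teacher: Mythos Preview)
Your proposal is correct and follows essentially the same strategy as the paper: Taylor-expand in $s=\sigma(Ax_1,Ax_2)$, eliminate the linear term, factor the quadratic contribution into separate $t$- and $x$-integrals, invoke Lemma~\ref{lemma:twosigma} for the $x$-integral, and verify the $t$-integral is strictly negative. The only cosmetic differences are that you kill the $us$ term via $(t_1,t_2)$-parity whereas the paper invokes Lemma~\ref{lemma:onesigma} on the $x$-side, and you evaluate the $t$-integral via the Gaussian covariance identity $E[u^2]=(\gamma_1+\gamma_2)/(2\Delta)$ rather than the paper's completing-the-square computation---both routes land on the same negative constant $-\gamma_1\gamma_2\gamma_3/\Delta$ (up to the normalization factor).
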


\begin{proof}
By definition,
\begin{multline*}
T'(g_1,g_2,g_3)=\iint g_1(z_1)g_2(z_2)[g_3(-z_1-z_2-e_{2d+1}\sigma(Ax_1,Ax_2))\\
-g_3(-z_1-z_2)]dz_1dz_2\\
=\iint g_1(z_1)g_2(z_2)[e^{-\gamma_3((x_1+x_2)^2+(t_1+t_2+\sigma(Ax_1,Ax_2))^2}\\
-e^{-\gamma_3((x_1+x_2)^2+(t_1+t_2)^2)}]dz_1dz_2\\
=\iint g_1(z_1)g_2(z_2)g_3(-z_1-z_2)(e^{-\gamma_3(t_1+t_2)\sigma(Ax_1,Ax_2)-\gamma_3\sigma(Ax_1,Ax_2)^2}-1)dz_1dz_2.
\end{multline*}

By a simple Taylor expansion, the above is equal to

\begin{equation*}
\iint g_1(z_1)g_2(z_2)g_3(-z_1-z_2)\left[-2\gamma_3\alpha\beta-\gamma_3\beta^2+2\gamma_3^2\alpha^2\beta^2\right]dz_1dz_2+O(\|A^TJA\|^3),
\end{equation*}
where $\alpha:=(t_1+t_2)$ and $\beta:=\sigma(Ax_1,Ax_2)$. The higher powers of $\alpha$ and $\beta$ in the Taylor expansion lead to the $O(\|A^TJA\|^3)$ term because $|\sigma(Ax_1,Ax_2)|\leq \|A^TJA\|\cdot|x_1|\cdot|x_2|$ and the powers of $x$ and $t$ may be absorbed into the functions $g_j$. The resulting sum of integrals converges because the Taylor expansion for the exponential function has summable coefficients. (Formally, one may take the integral over the closed ball of radius $R$ in $\R^{2d}\times\R^{2d}$ so the Taylor expansion converges uniformly. Then, take the limit as $R\to\infty$. This reasoning will also be applied in later lemmas.)

By Lemma \ref{lemma:onesigma}, $\iint g_1(z_1)g_2(z_2)g_3(-z_1-z_2)\cdot2\gamma_3\alpha\beta dz_1dz_2=0$. 

By factoring the integral in $z_j$ into separate integrals over the $x_j$ and $t_j$, we see that
\begin{multline*}
\iint g_1(z_1)g_2(z_2)g_3(-z_1-z_2)\beta^2\gamma_3(2\gamma_3\alpha^2-1)dz_1dz_2\\
=\gamma_3\iint g_1(x_1)g_2(x_2)g_3(x_1+x_2)\sigma(Ax_1,Ax_2)^2dx_1dx_2\\
\times \iint g_1(t_1)g_2(t_2)g_3(t_1+t_2)(2\gamma_3(t_1+t_2)^2-1)dt_1dt_2.
\end{multline*}

Since $\iint g_1(x_1)g_2(x_2)g_3(x_1+x_2)\sigma(Ax_1,Ax_2)^2dx_1dx_2=C_{d,\bp}\|A^TJA\|^2$ by Lemma \ref{lemma:twosigma}, it suffices to show that

\begin{equation*}
\iint g_1(t_1)g_2(t_2)g_3(t_1+t_2)(2\gamma_3(t_1+t_2)^2-1)dt_1dt_2<0.
\end{equation*}

Completing the square, we see that

\begin{multline*}
\iint g_1(t_1)g_2(t_2)g_3(t_1+t_2)(2\gamma_3(t_1+t_2)^2-1)dt_1dt_2\\
=\iint e^{-(\gamma_1+\gamma_3)t_1^2-2\gamma_3t_1t_2}e^{-(\gamma_2+\gamma_3)t_2^2}(2\gamma_3(t_1+t_2)^2-1)dt_1dt_2\\
=\iint e^{-\left[(\sqrt{\gamma_1+\gamma_3)}t_1+\left(\frac{\gamma_3}{\sqrt{\gamma_1+\gamma_3}}t_2\right)\right]^2}e^{-\left[\gamma_2+\gamma_3-\frac{\gamma_3^2}{\gamma_1+\gamma_3}\right]t_2^2}(2\gamma_3(t_1+t_2)^2-1)dt_1dt_2.
\end{multline*}

We now make the change of variables $t_1\mapsto t_1-\frac{\gamma_3}{\gamma_1+\gamma_3}t_2, t_2\mapsto t_2$ so the above becomes

\begin{equation*}
\iint e^{-(\gamma_1+\gamma_3)t_1^2}e^{-\left[\gamma_2+\gamma_3-\frac{\gamma_3^2}{\gamma_1+\gamma_3}\right]t_2^2}(2\gamma_3(t_1+\left(1-\frac{\gamma_3}{\gamma_1+\gamma_3}\right)t_2)^2-1)dt_1dt_2.
\end{equation*}

Since the exponential terms are even in $t_1$ and $t_2$ and the cross terms $t_1t_2$ are odd in both variables, this is equal to

\begin{equation*}
\iint e^{-(\gamma_1+\gamma_3)t_1^2}e^{-\left[\gamma_2+\gamma_3-\frac{\gamma_3^2}{\gamma_1+\gamma_3}\right]t_2^2}(2\gamma_3t_1^2+2\gamma_3\left(\frac{\gamma_1}{\gamma_1+\gamma_3}\right)^2t_2^2-1)dt_1dt_2.
\end{equation*}

At this point, we use the fact that for $b>0$, $\int_\R r^2e^{-br^2}dr=\frac{1}{2b}\int_\R e^{-br^2}dr$. Letting $M=\iint e^{-(\gamma_1+\gamma_3)t_1^2}e^{-\left[\gamma_2+\gamma_3-\frac{\gamma_3^2}{\gamma_1+\gamma_3}\right]t_2^2}dt_1dt_2$, the integral in question is equal to

\begin{multline*}
M\left[\frac{2\gamma_3}{2(\gamma_1+\gamma_3)}+\frac{2\gamma_3}{2\left[\gamma_2+\gamma_3-\frac{\gamma_3^2}{\gamma_1+\gamma_3}\right]}\cdot\left(\frac{\gamma_1}{\gamma_1+\gamma_3}\right)^2-1\right]\\
=M\left[\frac{\gamma_3}{(\gamma_1+\gamma_3)}+\frac{\gamma_1^2\gamma_3}{(\gamma_1+\gamma_3)(\gamma_1\gamma_2+\gamma_1\gamma_3+\gamma_2\gamma_3)}-1\right]\\
=M\left[\gamma_3\frac{\gamma_1\gamma_2+\gamma_1\gamma_3+\gamma_2\gamma_3+\gamma_1^2}{(\gamma_1+\gamma_3)(\gamma_1\gamma_2+\gamma_1\gamma_3+\gamma_2\gamma_3)}-1\right]\\
=M\left[\gamma_3\frac{(\gamma_1+\gamma_3)(\gamma_1+\gamma_2)}{(\gamma_1+\gamma_3)(\gamma_1\gamma_2+\gamma_1\gamma_3+\gamma_2\gamma_3)}-1\right]\\
=M\left[\frac{\gamma_1\gamma_3+\gamma_2\gamma_3}{\gamma_1\gamma_2+\gamma_1\gamma_3+\gamma_2\gamma_3}-1\right]<0
\end{multline*}
since $M>0$ and $\gamma_j>0$ for all $j$.
\end{proof}


\begin{lemma}\label{lemma: T' 2g 1f}
$T'(f_1,g_2,g_3)=O(\|A^TJA\|^2\|f_1\|_{p_1})$.
\end{lemma}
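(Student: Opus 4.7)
The plan is to imitate the Taylor-expansion argument from Lemma \ref{lemma: T' 3g} while handling the general $L^{p_1}$ function $f_1$ through H\"older's inequality. First I would factor $g_3(-z_1-z_2)$ out of the bracketed difference to obtain
\begin{equation*}
T'(f_1,g_2,g_3) = \iint f_1(z_1) g_2(z_2) g_3(-z_1-z_2)\bigl(e^{-2\gamma_3\alpha\beta - \gamma_3\beta^2} - 1\bigr)\, dz_1 dz_2,
\end{equation*}
with $\alpha = t_1+t_2$ and $\beta = \sigma(Ax_1,Ax_2)$, and expand the exponential. Every term in the expansion other than $-2\gamma_3\alpha\beta$ carries at least $\beta^2$, and so is of order $\|A^TJA\|^k$ for some $k \geq 2$.

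The main step is to show that the first-order integral
\begin{equation*}
-2\gamma_3 \iint f_1(z_1) g_2(z_2) g_3(-z_1-z_2)(t_1+t_2)\sigma(Ax_1,Ax_2)\, dz_1 dz_2
\end{equation*}
vanishes. Since $g_j(x,t) = e^{-\gamma_j|x|^2}e^{-\gamma_j t^2}$ and $\sigma(Ax_1,Ax_2)$ depends only on $x_1, x_2$, Fubini allows me to perform the $t$-integrations first, defining
\begin{equation*}
F(x_1) := \iint f_1(x_1,t_1)\, e^{-\gamma_2 t_2^2}\, e^{-\gamma_3(t_1+t_2)^2}(t_1+t_2)\, dt_1\, dt_2.
\end{equation*}
A standard application of H\"older in $t_1$ followed by integration in $x_1$ gives $\|F\|_{p_1} = O(\|f_1\|_{p_1})$; in particular $F \in L^{p_1}(\R^{2d})$. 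The remaining $x$-integral then takes the form $-2\gamma_3\iint F(x_1)\,e^{-\gamma_2|x_2|^2}\,e^{-\gamma_3|x_1+x_2|^2}\,\sigma(Ax_1,Ax_2)\, dx_1\, dx_2$, which vanishes by Lemma \ref{lemma:onesigma} applied to $F$.

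For each term of order $\|A^TJA\|^k$ ($k\geq 2$) in the expansion, the polynomial prefactor in $z_1, z_2$ (namely powers of $|x_1|, |x_2|, |t_1+t_2|$ arising from $|\sigma(Ax_1,Ax_2)| \leq \|A^TJA\||x_1||x_2|$ and $|\alpha|^m$) can be absorbed into the Gaussians $g_2, g_3$ at the cost of slightly worsened exponents. Integrating out $z_2$ leaves $\int|f_1(z_1)|H_k(z_1)\, dz_1$ for some Schwartz function $H_k$, which is $O(\|H_k\|_{p_1'}\|f_1\|_{p_1})$ by H\"older since $p_1'<\infty$. The factorial decay of the Taylor coefficients of $e^{-\gamma_3(2\alpha\beta+\beta^2)}$ makes $\sum_{k\geq 2}\|H_k\|_{p_1'}\|A^TJA\|^k = O(\|A^TJA\|^2)$, producing the desired bound. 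The only real obstacle is bookkeeping: to justify exchanging the infinite sum with the integral against $|f_1|$, I would follow Lemma \ref{lemma: T' 3g} and first truncate the $x$-integrations to a ball of radius $R$, where the Taylor series converges uniformly, and then let $R\to\infty$.
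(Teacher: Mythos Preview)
Your proposal is correct and follows essentially the same approach as the paper: Taylor expand, kill the first-order $\alpha\beta$ term via Lemma~\ref{lemma:onesigma}, and control the $\beta^{\ge 2}$ terms by absorbing the polynomial factors into $g_2,g_3$ and applying H\"older against $f_1$. The only difference is that you spell out explicitly the reduction to Lemma~\ref{lemma:onesigma} (integrating out $t_1,t_2$ to produce an $L^{p_1}(\R^{2d})$ function $F$), whereas the paper invokes the lemma in one line; both arguments are the same in substance.
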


\begin{proof}
Following the reasoning at the beginning of the proof of Lemma \ref{lemma: T' 3g}, we have
\begin{equation*}
T'(g_1,g_2,f_3)=\iint f_1(z_1)g_2(z_2)g_3(-z_1-z_2)[-2\gamma_3\alpha\beta]dz_1dz_2+O(\|A^TJA\|^2\|f_1\|_{p_1}),
\end{equation*}
where $\alpha=t_1+t_2$ and $\beta=\sigma(Ax_1,Ax_2)$ as before. The higher-order powers of $\alpha$ and $\beta$ provide a $O(\|A^TJA\|^2\|f_1\|{p_1})$ term since $|\sigma(Ax_1,Ax_2)|\leq \|A^TJA\|\cdot|x_1|\cdot|x_2|$ and powers may be absorbed into $g_2$ and $g_3$ to give an $L^{p_1'}$ function in $z_1$. As in the proof of Lemma \ref{lemma: T' 3g}, the integrals for all the powers are summable because the original Taylor expansion is summable.

The term coming from $\alpha\beta$ gives 0 by Lemma \ref{lemma:onesigma}.
\end{proof}

Naively, one may expect the term $T'(f_{1},f_{2},g_3)$ to be $O(\|A^TJA\|\cdot\|\bf\|_\bp^2)$. This is shown to be false by taking $\phi\in C_0^\infty(\R^{2d+1})$ to be a bump function near the origin, and letting $f_1(z_1)=\phi(z_1-(\lambda,...,\lambda)), f_2(z_2)=\phi(z_2+(\lambda,...,\lambda))$ as $|\lambda|\to\infty$. It is here that we will rely heavily on the properties of $f_{j,\sharp}$ in the decomposition $f_j=f_{j,\sharp}+f_{j,\flat}$.

\begin{lemma}\label{lemma: T' 1g 2f}
$T'(f_{1,\sharp},f_{2,\sharp},g_3)=o(\|\bf\|_{\bp}^2+\|A^TJA\|^2)$ with decay rate depending only on $\eta$.
\end{lemma}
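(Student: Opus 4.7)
The plan is to Taylor expand $g_3(-z_1-z_2-e_{2d+1}\beta)-g_3(-z_1-z_2)$ as a power series in $\beta:=\sigma(Ax_1,Ax_2)$, as in Lemmas \ref{lemma: T' 3g} and \ref{lemma: T' 2g 1f}, and bound each term via Young's inequality on $\R^{2d+1}$ combined with a new H\"older interpolation tailored to $f_{j,\sharp}$. Writing $\alpha:=t_1+t_2$, one has
$$g_3(-z_1-z_2-e_{2d+1}\beta)-g_3(-z_1-z_2)=g_3(-z_1-z_2)\sum_{k\ge 1}\frac{(-\gamma_3)^k(2\alpha\beta+\beta^2)^k}{k!},$$
which, after binomial expansion, becomes a sum of terms $c_{a,b}\alpha^a\beta^b$ with $b\ge 1$ and $a\le b$.

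The central new ingredient is the interpolation estimate: for any polynomial weight $\psi$ on $\R^{2d+1}$,
\begin{equation*}
\|f_{j,\sharp}\psi\|_{p_j}\le C_\psi\,\eta^{(p_j-1)/p_j}\|f_{j,\sharp}\|_{p_j}^{1/p_j},
\end{equation*}
obtained by writing $|f_{j,\sharp}|^{p_j}\le(\eta g_j)^{p_j-1}|f_{j,\sharp}|$ and applying H\"older with exponents $(p_j,p_j')$, using that $(p_j-1)p_j'=p_j$ preserves Gaussian integrability. For each term $\iint f_{1,\sharp}f_{2,\sharp}g_3(-z_1-z_2)\alpha^a\beta^b$, I would bound $|\beta|^b\le\|A^TJA\|^b|x_1|^b|x_2|^b$ and recognize $|\alpha|^a$ as a polynomial in $w_{2d+1}|_{w=-z_1-z_2}$, so that the integral is a convolution trilinear form on $\R^{2d+1}$ which, by Young, is controlled by
$$\bA_\bp^{2d+1}\|A^TJA\|^b\|f_{1,\sharp}|x_1|^b\|_{p_1}\|f_{2,\sharp}|x_2|^b\|_{p_2}\|g_3\cdot w_{2d+1}^a\|_{p_3}.$$
Applying the interpolation estimate to the first two factors yields a bound $C_{a,b}(\eta)\|A^TJA\|^b\|\bf\|_\bp^\theta$, where $\theta:=1/p_1+1/p_2\in(1,3/2)$ since $p_1,p_2\in(1,2)$.

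For the leading ($b=1$) contribution $I_1=-2\gamma_3\iint f_{1,\sharp}f_{2,\sharp}g_3(-z_1-z_2)\alpha\beta$, this gives $|I_1|\le C(\eta)\|A^TJA\|\|\bf\|_\bp^\theta$. Setting $r:=\max(\|\bf\|_\bp,\|A^TJA\|)$, I have $\|A^TJA\|\|\bf\|_\bp^\theta\le r^{\theta+1}\le r^{\theta-1}(\|\bf\|_\bp^2+\|A^TJA\|^2)$, and since $\theta>1$ the prefactor $r^{\theta-1}\to 0$ as $r\to 0$, with rate depending only on $\eta$. Hence $|I_1|=o(\|\bf\|_\bp^2+\|A^TJA\|^2)$ in the sense required. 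Higher-order terms ($b\ge 2$) pick up additional factors of $\|A^TJA\|$ and are therefore strictly smaller in $r$, hence also $o(\|\bf\|_\bp^2+\|A^TJA\|^2)$.

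The main obstacle is to justify interchange of summation and integration in the Taylor expansion: the constants $C_{a,b}(\eta)$ grow like $\sqrt{a!b!}$ from Hermite-type Gaussian moments, balanced by $1/((a+b)/2)!$ from the Taylor coefficients, so the combined series $\sum_{a,b}|c_{a,b}|C_{a,b}(\eta)\|A^TJA\|^b$ converges only for $\|A^TJA\|$ sufficiently small (depending on $\eta,\gamma_3$). This suffices, because Theorem \ref{thm: qualitative, distance version} has already reduced the problem to small perturbations. Alternatively, following the strategy of the previous two lemmas, one truncates integration to a closed ball of radius $R$ where convergence is uniform, applies the per-term bounds there, and sends $R\to\infty$, using the pointwise domination $|f_{1,\sharp}||f_{2,\sharp}||g_3(z')-g_3(z)|\le 2\eta^2 g_1 g_2\,(g_3(z)+g_3(z'))$ from $|f_{j,\sharp}|\le\eta g_j$.
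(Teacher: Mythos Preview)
Your argument is correct and takes a genuinely different route from the paper's. The paper proceeds by a two–stage decomposition: first it splits into cases according to whether $\|A^TJA\|^3$ dominates $\|f_{1,\sharp}\|_{p_1}\|f_{2,\sharp}\|_{p_2}$ (the ``large $A$'' case being dispatched by the trivial bound), and in the remaining case it further decomposes each $f_{j,\sharp}=f_{j,\sharp,\le M_j}+f_{j,\sharp,>M_j}$ with radii $M_j\sim\log(\|f_{j,\sharp}\|_{p_j}^{-1})$ chosen so that $\|f_{j,\sharp,>M_j}\|_{p_j}=\|f_{j,\sharp}\|_{p_j}^2$. The Taylor expansion is then applied only on the compact box $B(0,M_1)\times B(0,M_2)$, and the leading $\alpha\beta$ term is bounded by $C\|f_{1,\sharp}\|_{p_1}\|f_{2,\sharp}\|_{p_2}\|A^TJA\|M_1^{3/2}M_2^{3/2}$, which after substituting $\|A^TJA\|\le(\|f_{1,\sharp}\|_{p_1}\|f_{2,\sharp}\|_{p_2})^{1/3}$ and the logarithmic bound on $M_j$ yields $o(\|\bf\|_\bp^2)$.

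Your H\"older interpolation $\|f_{j,\sharp}\psi\|_{p_j}\le C_\psi\eta^{(p_j-1)/p_j}\|f_{j,\sharp}\|_{p_j}^{1/p_j}$ replaces both the case split and the spatial truncation in one stroke, and yields a clean power gain $r^{\theta-1}$ with $\theta=1/p_1+1/p_2=2-1/p_3\in(1,3/2)$, as opposed to the paper's logarithmic–times–power gain. This is arguably more elegant and gives a more explicit decay rate. The paper's localization to $B(0,M_j)$, on the other hand, makes the Taylor–series convergence issue entirely trivial (uniform convergence on compacta), whereas in your approach the summability of $\sum_{a,b}|c_{a,b}|C_{a,b}(\eta)\|A^TJA\|^b$ requires the moment–growth versus Taylor–coefficient balance you sketch, and genuinely uses smallness of $\|A^TJA\|$. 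Your proposed alternative (truncate to radius $R$, bound termwise uniformly in $R$, then pass to the limit via the pointwise domination $|f_{1,\sharp}f_{2,\sharp}|\,|g_3(z')-g_3(z)|\le 2\eta^2 g_1g_2(g_3(z)+g_3(z'))$) is sound and parallels the paper's own justification in the earlier lemmas.
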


In the proof of Lemma \ref{lemma: T' 1g 2f}, we will use the following trivial bound
\begin{equation}\label{eq:trivial bound}
|T'(h_1,h_2,g_3)|=O\left(\|h_1\|_{p_1}\|h_2\|_{p_2}\right)
\end{equation}
for arbitrary functions $h_j\in L^{p_j}$. The proof mimics that of Lemma 3.2 in \cite{MeTC}.

\begin{proof}
First, suppose that $\|A^TJA\|^3\geq\|f_{1,\sharp}\|_{p_1}\|f_{2,\sharp}\|_{p_2}$. Note that $\|A^TJA\|$ may be taken small enough that $\|A^TJA\|^3\leq\|A^TJA\|^2$ by our reduction to small perturbations in Theorem \ref{thm: qualitative, distance version}. By \eqref{eq:trivial bound},
\begin{equation*}
T'(f_{1,\sharp},f_{2,\sharp},g_3)\leq C\|f_{1,\sharp}\|_{p_1}\|f_{2,\sharp}\|_{p_2}\leq\|A^TJA\|^3=o(\|\bf\|_\bp^2+\|A^TJA\|^2)
\end{equation*}
and we are done.

So suppose that $\|A^TJA\|^3<\|f_{1,\sharp}\|_{p_1}\|f_{2,\sharp}\|_{p_2}$. Now, for $j=1,2$, write $f_{j,\sharp}=f_{j,\sharp,\leq M_j}+f_{j,\sharp,>M_j}$, where $f_{j,\sharp,\leq M_j}=f_{j,\sharp}\one_{B(0,M_j)}$ and $f_{j,\sharp,>M_j}=f_{j,\sharp}\one_{B(0,M_j)^c}$. Here, $\one_E$ refers to the indicator function of the set $E$, $B(z_0,R)$ refers to the closed ball of radius $R$ centered at $z_0$, $E^c$ is the complement of the set $E$, and $M_j$ is chosen so that
\begin{equation}\label{eq:smallnorm}
\|f_{j,\sharp,>M_j}\|_{p_j}=\|f_{j,\sharp}\|_{p_j}^2.
\end{equation}

Note that $M_j$ is dependent on $\eta$.

An elementary calculation shows $M_j\leq C\log(\|f_{j,\sharp}\|_{p_j}^{-1})$. (See the proof of Lemma 3.2 in \cite{MeTC} for details.)

Expand 
\begin{multline*}
T'(f_{1,\sharp},f_{2,\sharp},g_3)=T'(f_{1,\sharp,>M_1},f_{2,\sharp,>M_2},g_3)+T'(f_{1,\sharp,>M_1},f_{2,\sharp,\leq M_2},g_3)\\
+T'(f_{1,\sharp,\leq M_1},f_{2,\sharp,>M_2},g_3)+T'(f_{1,\sharp,\leq M_1},f_{2,\sharp,\leq M_2},g_3)
\end{multline*}
The first three of these terms are shown to be $O(\|\bf\|_\bp^3)$ by combining the trivial bound \eqref{eq:trivial bound} with \eqref{eq:smallnorm}.

Let $R=B(0,M_1)\times B(0,M_2)\subset \R^{2d}\times\R^{2d}$. Recall by our earlier Taylor expansion that

\begin{multline*}
T'(f_{1,\sharp,\leq M_1},f_{2,\sharp,\leq M_2},g_3)=\iint_R f_{1,\sharp,\leq M_1}(z_1)f_{2,\sharp,\leq M_2}(z_2)g_3(-z_1-z_2)\\\times\left[-2\gamma_3\alpha\beta-\gamma_3\beta^2+2\gamma_3^2\alpha^2\beta^2+O(\alpha^3\beta^3)+O(\beta^4)\right]dz_1dz_2.
\end{multline*}
In this case, the justification for inclusion of $O(\cdot)$ terms in the integrand is that the integral is over a compact domain; thus, the Taylor expansion converges uniformly.

We see that the absolute value of the integral term containing $-2\gamma_3\alpha\beta$ may be controlled by 

\begin{multline*}
C\iint_R|f_{1,\sharp}(z_1)|\cdot|f_{2,\sharp}(z_2)|\cdot g_3(-z_1-z_2)\|A^TJA\|\cdot|x_1|\cdot|x_2|\cdot|t_1+t_2|dz_1dz_2\\
\leq C\|f_{1,\sharp}\|_{p_1}\|f_{2,\sharp}\|_{p_2}\|A^TJA\|M_1^{3/2}M_2^{3/2}\\
\leq C\|f_{1,\sharp}\|_{p_1}^{4/3}\|f_{2,\sharp}\|_{p_2}^{4/3}\log(\|f_{1,\sharp}\|_{p_1})^{-3/2}\log(\|f_{2,\sharp}\|_{p_2})^{-3/2}=o(\|\bf\|_\bp^2)
\end{multline*}

The remaining terms may be dealt with similarly, the only difference being that different powers of $\|A^TJA\|$ and $M_j$ are obtained; however, the end result is always $o(\|\bf\|_\bp^2)$.

\end{proof}

\begin{proof}[Proof of Proposition \ref{prop:expand in group structure}]
Begin by using the trilinearity of $\T$ to expand $\T(\bg+\bf,A,0)-\T(\bg+\bf,0,0)$, expanding again via the decomposition $f_j=f_{j,\sharp}+f_{j,\flat}$ when terms contain two $f_j$ and one $g_j$. Applying Lemmas \ref{lemma: T' 3g}, \ref{lemma: T' 2g 1f}, and \ref{lemma: T' 1g 2f} to the resulting terms, considering that they apply equally after permutation of indices. The term with three $f_j$'s is trivially $O(\|\bf\|_\bp^3)$.

Note that we may ignore the division by $\prod_j\|g_j+f_j\|_{p_j}$ since for small $\|\bf\|_\bp$, this term is approximately the constant value $\prod_j\|g_j\|_{p_j}$; this only results in minor modifications to the constants in the right hand side of the conclusion.

The remaining terms are of the form $T'(f_{1,\sharp},f_{2,\flat},g_3), T'(f_{1,\flat},f_{2,\flat},g_3)$, or any of the similar forms obtained by permutations; hence, they may not be addressed by Lemma \ref{lemma: T' 1g 2f}. However, they may still be controlled by the trivial bound \eqref{eq:trivial bound}, resulting in the $O(\|\bf_\sharp\|_\bp\|\bf_\flat\|_\bp+\|\bf_\flat\|_\bp^2)$ term.
\end{proof}

%

\section{Expansion in Twisting Factor}\label{sec:Expansion in Twisting Factor}

Fix $b\in\R$ and $(2d)\times(2d)$ matrix $A$. Define

\begin{equation}\label{eq: def T''}
T''(\bh):=\iint h_1(z_1)h_2(z_2)h_3(-z_1-z_2-e_{2d+1}\sigma(Ax_1,Ax_2))\left[e^{ib\sigma(Ax_1,Ax_2)}-1\right]dz_1dz_2
\end{equation}

We analyze the expansion of the difference term

\begin{equation*}
T''(\bg+\bf)=\T(\bg+\bf,A,b)-\T(\bg+\bf,A,0).
\end{equation*}

The main result of this section is the following:

\begin{proposition}\label{prop:expansion in twisting}
Let $d\geq1$ and $\bp$ be an admissible triple of exponents. Then, there exists $C>0$ such that
\begin{multline*}
\frac{\T(\bg+\bf,A,b)-\T(\bg+\bf,A,0)}{\prod_j\|g_j+f_j\|_{p_j}}\le-Cb^2\|A^TJA\|^2+o([(1+b^2)^{1/2}\|A^TJA\|+\|\bf\|_\bp]^2)\\
+O(\|\bf_\sharp\|_\bp\|\bf_\flat\|_\bp+\|\bf_\flat\|_\bp^2)
\end{multline*}
\end{proposition}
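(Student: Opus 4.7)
The plan is to mirror the strategy of Section \ref{sec:Expansion in Group Structure}, replacing the group-structure perturbation $\sigma$-shift in $h_3$ by the phase perturbation $e^{ib\sigma}-1$. Expand $T''(\bg+\bf)$ by trilinearity into eight terms grouped by the number of $f_j$ factors, and Taylor expand
\begin{equation*}
e^{ib\sigma(Ax_1,Ax_2)}-1 = ib\sigma - \tfrac{1}{2}b^2\sigma^2 - \tfrac{i}{6}b^3\sigma^3 + \cdots,
\end{equation*}
where $\sigma=\sigma(Ax_1,Ax_2)$. Where needed, I would simultaneously expand $h_3(-z_1-z_2-e_{2d+1}\sigma)$ around $-z_1-z_2$ as in the proof of Lemma \ref{lemma: T' 3g}. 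The reduction to small perturbations from Theorem \ref{thm: qualitative, distance version} guarantees that $\|\bf\|_\bp$, $\|A^TJA\|$ and $b\|A^TJA\|$ are each $o(1)$, which will let me absorb higher-order factors into the advertised $o(\cdot)$ remainder.

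The pure-Gaussian term $T''(g_1,g_2,g_3)$ supplies the main negative quantity. Odd powers of $b\sigma$ contribute purely imaginary integrands, so the real part is dominated by
\begin{equation*}
-\tfrac{b^2}{2}\iint g_1(z_1)g_2(z_2)g_3(-z_1-z_2-e_{2d+1}\sigma)\sigma^2\,dz_1dz_2.
\end{equation*}
Expanding $g_3(-z_1-z_2-e_{2d+1}\sigma) = g_3(-z_1-z_2)(1+O(\alpha\sigma+\sigma^2))$ with $\alpha=t_1+t_2$ and applying Lemma \ref{lemma:twosigma} to the leading integral, then absorbing the correction (of size $O(b^2\|A^TJA\|^3)$) and the higher $b^4\sigma^4$ Taylor tail into the remainder, I obtain $-C_{d,\bp}b^2\|A^TJA\|^2 + o(b^2\|A^TJA\|^2)$.

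For the one-$f$ terms $T''(f_j,g_{j'},g_{j''})$, the leading-in-$b$ contribution $ib\iint f_j g_{j'}g_{j''}(-z_1-z_2)\sigma\,dz_1dz_2$ is imaginary, and vanishes anyway after integrating the spatial variables (Lemma \ref{lemma:onesigma}, applied slicewise in $t_1$); the remaining pieces are controlled by $O(b\|A^TJA\|^2\|f_j\|_{p_j}) + O(b^2\|A^TJA\|^2\|f_j\|_{p_j})$, both of which are $o((\|A^TJA\|(1+b^2)^{1/2}+\|\bf\|_\bp)^2)$ under small perturbations via AM-GM combined with $b\|A^TJA\|=o(1)$. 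The three-$f$ term is $O(\|\bf\|_\bp^3)=o(\|\bf\|_\bp^2)$ via the trivial bound $|T''(h_1,h_2,h_3)|\le C\prod_j\|h_j\|_{p_j}$ inherited from Young's inequality (with the harmless factors $|e^{ib\sigma}-1|\le 2$ and $|g_3(\cdot)|\le 1$).

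The main obstacle is the two-$f$ one-$g$ contribution $T''(f_i,f_j,g_k)$, which, as in Lemma \ref{lemma: T' 1g 2f}, cannot be handled by crude bounds alone. I would decompose $f_j=f_{j,\sharp}+f_{j,\flat}$ and treat the sharp-sharp piece by splitting cases on whether $\|A^TJA\|^3\ge\|f_{1,\sharp}\|_{p_1}\|f_{2,\sharp}\|_{p_2}$ (use the trivial bound) or not (truncate each $f_{j,\sharp}$ to $B(0,M_j)$ with $M_j=O(\log\|f_{j,\sharp}\|_{p_j}^{-1})$ so that $\|f_{j,\sharp,>M_j}\|_{p_j}=\|f_{j,\sharp}\|_{p_j}^2$). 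The inner piece admits a uniformly convergent Taylor expansion on the compact region; using the pointwise bound $|f_{j,\sharp}|\le\eta g_j$ to absorb polynomial growth in $|x_j|,|t_j|$ into the Gaussian weights, each resulting integrand of the form $\|A^TJA\|^\ell b^m M_1^{a_1}M_2^{a_2}\|f_{1,\sharp}\|_{p_1}\|f_{2,\sharp}\|_{p_2}$ (with $\ell+m\ge 2$ if there is any twisting factor present, since we already extracted the pre-Taylor zero contribution) is $o(\|\bf\|_\bp^2 + (1+b^2)\|A^TJA\|^2)$ thanks to the $\log$ growth of $M_j$. The sharp-flat and flat-flat pieces are controlled directly by the trivial bound, giving $O(\|\bf_\sharp\|_\bp\|\bf_\flat\|_\bp + \|\bf_\flat\|_\bp^2)$. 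Assembling all contributions exactly as in the proof of Proposition \ref{prop:expand in group structure}, and observing that $\prod_j\|g_j+f_j\|_{p_j}=\prod_j\|g_j\|_{p_j}+O(\|\bf\|_\bp)$ only perturbs constants, completes the proof.
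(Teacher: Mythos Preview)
Your proposal is essentially correct and follows the paper's own argument (trilinear expansion of $T''$, Lemmas \ref{lemma: T'' 3g}--\ref{lemma: T'' 1g 2f}, then reassembly as in Proposition \ref{prop:expand in group structure}). One point to tighten: in the two-$f$ sharp--sharp analysis the paper performs a \emph{second} case split, on whether $b^3\|A^TJA\|^3\ge\|f_{1,\sharp}\|_{p_1}\|f_{2,\sharp}\|_{p_2}$, disposing of that regime via the trivial bound; only under both negations does one get $b\|A^TJA\|<(\|f_{1,\sharp}\|_{p_1}\|f_{2,\sharp}\|_{p_2})^{1/3}$, which is what turns the leading term $b\|A^TJA\|M_1M_2\|f_{1,\sharp}\|_{p_1}\|f_{2,\sharp}\|_{p_2}$ into an $o(\|\bf\|_\bp^2)$ quantity. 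Your single split on $\|A^TJA\|^3$ alone does not directly control the $b$-dependent Taylor contributions, so you should add this second split (or equivalently argue via a finite Taylor expansion with explicit cubic remainder in $b\sigma$).
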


As in the previous section, the trilinearity of $T''$ gives us 8 terms, each of which has three $g_j$, two $g_j$ and one $f_j$, one $g_j$ and two $f_j$, or 3 $f_j$.

\begin{lemma}\label{lemma: T'' 3g}
$T''(g_1,g_2,g_3)=-C_{d,\bp}b^2\|A^TJA\|^2+O(\|A^TJA\|^3+b^3\|A^TJA\|^3)$.
\end{lemma}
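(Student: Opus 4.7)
The plan is to leverage the product structure of $\bg$ by separating the $t$- and $x$-integrations, then expand the twisting factor $e^{ib\beta}-1$ (with $\beta=\sigma(Ax_1,Ax_2)$) in powers of $b\beta$, using a parity argument to eliminate the terms that are odd in $\beta$.

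First I would factor
\[
T''(\bg)=\iint_{\R^{2d}\times\R^{2d}}G(x_1,x_2)\,I_t(\beta)\,\bigl[e^{ib\beta}-1\bigr]\,dx_1\,dx_2,
\]
where $G(x_1,x_2)=e^{-\gamma_1|x_1|^2-\gamma_2|x_2|^2-\gamma_3|x_1+x_2|^2}$ and $I_t(\beta)=\int e^{-\gamma_1 t_1^2-\gamma_2 t_2^2-\gamma_3(t_1+t_2+\beta)^2}\,dt_1\,dt_2$. Completing the square in $(t_1,t_2)$ identifies $I_t(\beta)=C_0 e^{-C_1\beta^2}$ for positive constants $C_0,C_1$ depending only on $\bp$; crucially, $I_t$ is even in $\beta$.

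Next I would establish the parity lemma $\iint G(x)\,\beta^{2k+1}\,dx=0$ for every $k\geq 0$. The change of variables $x_1=y_1-\tfrac{\gamma_3}{\gamma_1+\gamma_3}y_2$, $x_2=y_2$ both diagonalizes $G$ (eliminating the $x_1\cdot x_2$ cross-term) and, by the antisymmetry of $A^TJA$, reduces $\beta$ to $y_1^T A^TJA\,y_2$ since $y_2^T A^TJA\,y_2=0$. The involution $y_1\mapsto-y_1$ then flips the sign of $\beta$ while preserving the Gaussian, killing all odd $\beta$-moments.

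With these tools, I would apply the pointwise bound $\bigl|e^{ib\beta}-1-ib\beta+\tfrac{b^2\beta^2}{2}\bigr|\leq\tfrac{|b\beta|^3}{6}$ and split $T''(\bg)$ into three pieces: the linear term $ib\iint G(x)\,I_t(\beta)\,\beta\,dx$ vanishes since $\beta\cdot I_t(\beta)$ is odd in $\beta$; the quadratic term contributes $-\tfrac{C_0 b^2}{2}\iint G\,\beta^2\,dx+O(b^2\|A^TJA\|^4)$ after expanding $I_t(\beta)=C_0+O(\beta^2)$, and by Lemma \ref{lemma:twosigma} the leading integral is a positive multiple of $\|A^TJA\|^2$, yielding $-C_{d,\bp}b^2\|A^TJA\|^2$; the Taylor remainder is controlled by $\tfrac{|b|^3}{6}\iint G(x)\,I_t(\beta)\,|\beta|^3\,dx\leq C|b|^3\|A^TJA\|^3$ via $|\beta|\leq\|A^TJA\|\,|x_1||x_2|$ and absorption of polynomial factors into $G$ and $I_t$. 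A short case analysis on $|b|\lessgtr 1$ (together with $\|A^TJA\|\leq 1$, which holds in the regime of Theorem \ref{thm: qualitative, distance version}) converts the $O(b^2\|A^TJA\|^4)$ error into $O(\|A^TJA\|^3+|b|^3\|A^TJA\|^3)$.

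The main obstacle is primarily bookkeeping: a naive term-by-term Taylor expansion of both $g_3(-z_1-z_2-e_{2d+1}\beta)$ and $e^{ib\beta}-1$ generates many cross-terms, each of which must be classified by parity in $\alpha=t_1+t_2$ and in $\beta$. Factoring out the $t$-integration at the start collapses the $\alpha$-dependence into the manifestly even function $I_t(\beta)$, so the parity of the surviving integrand in $\beta$ becomes transparent and only the clean Gaussian integral of $\beta^2$ via Lemma \ref{lemma:twosigma} needs to be invoked for the main term.
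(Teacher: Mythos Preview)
Your argument is correct and arrives at the same conclusion, but the route differs from the paper's. The paper Taylor-expands \emph{both} factors in the integrand: it writes $g_3(-z_1-z_2-e_{2d+1}\beta)=g_3(-z_1-z_2)[1-2\gamma_3\alpha\beta+O(\beta^2)]$ and $e^{ib\beta}-1=ib\beta-\tfrac12 b^2\beta^2+O(b^3\beta^3)$, multiplies the two expansions, and then disposes of the resulting cross-terms one at a time---terms carrying a single power of $\alpha=t_1+t_2$ vanish by oddness of the $t$-integrand, while the lone $ib\beta$ term is killed by Lemma~\ref{lemma:onesigma}. By contrast, you integrate out the $t$-variables exactly at the outset, collapsing all $\alpha$-dependence into the even function $I_t(\beta)=C_0e^{-C_1\beta^2}$, and then rely on a self-contained parity lemma (the diagonalizing change of variables that renders $\beta$ odd in $y_1$) to dispatch every odd $\beta$-moment simultaneously. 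Your approach is tidier for this particular lemma, since it avoids tracking the $\alpha\beta$ cross-terms individually; on the other hand, your parity lemma is specific to the triple $\bg$ and does not extend to the mixed terms $T''(f_1,g_2,g_3)$ treated next, whereas the paper's Lemma~\ref{lemma:onesigma} (which allows an arbitrary $f$ in one slot) is reused throughout Sections~\ref{sec:Expansion in Group Structure} and~\ref{sec:Expansion in Twisting Factor}. The small price you pay is the extra $O(b^2\|A^TJA\|^4)$ error from expanding $I_t$, which you then have to fold into the stated error via the case split $|b|\lessgtr1$; the paper's bookkeeping produces the error $O(\|A^TJA\|^3+b^3\|A^TJA\|^3)$ directly from the product of the two Taylor remainders (though an analogous implicit case split underlies its absorption of terms like $O(b\beta^3)$).
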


\begin{proof}
As in the proof of Lemma \ref{lemma: T' 3g}, we use a Taylor expansion, obtaining

\begin{equation}\label{eq: Taylor g3}
g_3(-z_1-z_2-e_{2d+1}\sigma(Ax_1,Ax_2))=g_3(-z_1-z_2)[1-2\gamma_3\alpha\beta+O(\beta^2)].
\end{equation}

Here, and again when powers of $\beta$ and $\alpha$ are used with $O(\cdot)$ notation, by $O(\beta^2)$ we mean that the remaining powers of $\beta$ in the Taylor expansion are of degree 2 or higher. (Issues of convergence may be addressed as in Section \ref{sec:Expansion in Group Structure}.)

Similarly,
\begin{equation}\label{eq: Taylor b term}
1-e^{ib\sigma(Ax_1,Ax_2)}=ib\beta-b^2\beta^2+O(b^3\beta^3).
\end{equation}

We now plug the product of \eqref{eq: Taylor g3} and \eqref{eq: Taylor b term} into \eqref{eq: def T''} with $h_j=g_j$. Each product of a $O(\cdot)$ term with another term gives a new term which is $O(\beta^3+b^3\beta^3)$ and the resulting integral is $O(\|A^TJA\|^3+b^3\|A^TJA\|^3)$.

Factoring out the $g_3(-z_1-z_2)$, the remaining terms are $ib\beta-b^2\beta^2-2ib\gamma_3\alpha\beta^2-2b^2\gamma_3\alpha\beta^3$. We first note that any integral with a single power of $\alpha$ must give 0 since
\begin{equation*}
\iint e^{-\gamma_1t_1^2}e^{-\gamma_2t_2^2}e^{-\gamma_3(t_1+t_2)^2}(t_1+t_2)dt_1dt_2=0
\end{equation*}
and the powers of $\beta$ only effect the integral in $x_1,x_2$. Second, the $ib\beta$ term gives 0 by Lemma \ref{lemma:onesigma}. The remaining term gives the integral
\begin{equation*}
\iint g_1(z_1)g_2(z_2)g_3(-z_1-z_2)b^2\sigma(Ax_1,Ax_2)^2dz_1dz_2=C_{d,\bp}b^2\|A^TJA\|^2,
\end{equation*}
by Lemma \ref{lemma:twosigma}.
\end{proof}

\begin{lemma}\label{lemma: T'' 2g 1f}
$T''(f_1,g_2,g_3)=O(\|f_1\|_{p_1}(\|A^TJA\|^2+b^2\|A^TJA\|^2)).$
\end{lemma}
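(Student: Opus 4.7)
The plan is to mirror the strategy of Lemma \ref{lemma: T'' 3g}, combined with the identification of a vanishing linear term as in the proof of Lemma \ref{lemma: T' 2g 1f}. Writing $\alpha = t_1+t_2$ and $\beta = \sigma(Ax_1, Ax_2)$ as before, I will Taylor expand both small factors in the integrand of $T''(f_1, g_2, g_3)$:
\begin{equation*}
g_3(-z_1-z_2-e_{2d+1}\beta) = g_3(-z_1-z_2)\bigl[1 - 2\gamma_3\alpha\beta + O(\beta^2)\bigr]
\end{equation*}
together with $e^{ib\beta} - 1 = ib\beta + O(b^2\beta^2)$. Multiplying and collecting, the only contribution of pure order $b\cdot\beta$ is the leading term $ib\beta$; every other cross-product carries at least two factors of $\beta$ or an additional factor of $b^2$.

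The leading $ib\beta$ term must be shown to vanish. Since each $g_j$ factors as a tensor product of Gaussians in the $x$- and $t$-variables, I can apply Fubini to integrate out $t_1$ and $t_2$ first, reducing the linear-in-$b$ contribution to
\begin{equation*}
ib \int_{\R^{2d}\times\R^{2d}} \tilde{F}_1(x_1)\, g_2(x_2)\, g_3(x_1+x_2)\, \sigma(Ax_1, Ax_2)\, dx_1\, dx_2,
\end{equation*}
where $\tilde{F}_1(x_1) := \iint f_1(x_1,t_1)\, g_2(t_2)\, g_3(t_1+t_2)\, dt_1\, dt_2$ is a function of $x_1$ alone (and is in $L^{p_1}(\R^{2d})$ by Minkowski). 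The remaining integral matches the hypothesis of Lemma \ref{lemma:onesigma} with $f = \tilde{F}_1$ and therefore vanishes identically.

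For the remaining terms, I will use the crude pointwise bound $|\beta|^k \le \|A^TJA\|^k |x_1|^k |x_2|^k$ to absorb polynomial weights into $g_2, g_3$, and then apply H\"older's inequality in $z_1$ (the inner integral in $z_2$ producing a Schwartz function of $z_1$) to extract the factor $\|f_1\|_{p_1}$; absolute convergence of the Taylor series follows from the summability argument already employed in Lemma \ref{lemma: T'' 3g}. The pure $\beta^2$ contribution gives $O(b^2\|A^TJA\|^2 \|f_1\|_{p_1})$, while the cross term $-2i\gamma_3 b\alpha\beta^2$ gives $O(|b|\,\|A^TJA\|^2\|f_1\|_{p_1})$, which is absorbed into the target by the trivial bound $|b| \le \tfrac{1}{2}(1+b^2)$. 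The cubic-and-higher residual terms, such as $O(b\beta^3)$ and $O(b^2\beta^3)$, contribute at most $O(|b|\,\|A^TJA\|^3\|f_1\|_{p_1})$, and the inequality $|b|\|A^TJA\|^3 \le \tfrac{1}{2}(\|A^TJA\|^4 + b^2\|A^TJA\|^2)$ combined with the small-perturbation bound $\|A^TJA\| \le 1$ folds these into $O\!\left((\|A^TJA\|^2 + b^2\|A^TJA\|^2)\|f_1\|_{p_1}\right)$ as well. The main obstacle is really just bookkeeping: one must verify that every one of the finitely-enumerated cross-products produced by the two Taylor expansions fits into one of the two allowed terms on the right-hand side, but no new analytic difficulty beyond those already treated in Lemmas \ref{lemma: T' 2g 1f} and \ref{lemma: T'' 3g} is expected to appear.
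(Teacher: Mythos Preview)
Your proof is correct and follows essentially the same route as the paper's: both proceed by the Taylor expansions of Lemma~\ref{lemma: T'' 3g}, observe that the $ib\beta$ term vanishes via Lemma~\ref{lemma:onesigma}, and bound all remaining terms by absorbing polynomial weights into $g_2, g_3$ and applying H\"older. You are slightly more explicit than the paper in reducing to the $\R^{2d}$ setting of Lemma~\ref{lemma:onesigma} (by integrating out the $t$-variables first) and in handling the odd powers of $b$ via $|b| \le \tfrac{1}{2}(1+b^2)$, points the paper leaves implicit under ``the remaining terms may be dealt with similarly.''
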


\begin{proof}
By the Taylor expansions given in the proof of Lemma \ref{lemma: T'' 3g}, it suffices to determine bounds for

\begin{equation*}
\iint f_1(z_1)g_2(z_2)g_3(-z_1-z_2)\left[ib\beta-b^2\beta^2-2ib\gamma_3\alpha\beta^2-2b^2\gamma_3\alpha\beta^3+O(\beta^3+b^3\beta^3)\right]dz_1dz_2.
\end{equation*}

As before, the $ib\beta$ term vanishes by Lemma \ref{lemma:onesigma}. For the $-b^2\beta^2$ term, we see that

\begin{multline*}
\left|\int f_1(z_1)\left[ \int(-b^2\beta^2)g_2(z_2)g_3(-z_1-z_2)dz_2\right]dz_1\right|\\ \leq b^2\|A^TJA\|^2\int |f_1(z_1)|\left[\int x_1^2x_2^2g_2(z_2)g_3(-z_1-z_2)dz_2\right]dz_1
\end{multline*}
is $O(\|f_1\|_{p_1}(b^2\|A^TJA\|^2))$ since $\int x_1^2x_2^2g_2(z_2)g_3(-z_1-z_2)dz_2\in L^{p_1'}(z_1)$.

The remaining terms may be dealt with similarly

\end{proof}

\begin{lemma}\label{lemma: T'' 1g 2f}
$T''(f_{1,\sharp},f_{2,\sharp},g_3)=o(\|\bf\|_\bp^2+(1+b^2)\|A^TJA\|^2)$.
\end{lemma}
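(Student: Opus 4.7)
The plan is to follow the argument of Lemma \ref{lemma: T' 1g 2f}, now treating the combined parameter $(1+b^2)^{1/2}\|A^TJA\|$ in the role that $\|A^TJA\|$ played there. The two ingredients I will use repeatedly are the trivial bound $|T''(h_1,h_2,g_3)| = O(\|h_1\|_{p_1}\|h_2\|_{p_2})$ (which follows from Young's inequality applied to each of the two $\T$'s making up $T''$), and the pointwise bound $|f_{j,\sharp}| \leq \eta g_j$, which forces the weighted moments of $|f_{j,\sharp}|$ against $|x|^k|t|^\ell$ to be controlled.

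I would first dispose of the large-parameter regime $\|f_{1,\sharp}\|_{p_1}\|f_{2,\sharp}\|_{p_2} \leq \bigl[(1+b^2)^{1/2}\|A^TJA\|\bigr]^3$: the trivial bound directly yields
\[
|T''(f_{1,\sharp},f_{2,\sharp},g_3)| \leq C\bigl[(1+b^2)^{1/2}\|A^TJA\|\bigr]^3 = o\bigl((1+b^2)\|A^TJA\|^2\bigr),
\]
since the reduction to small perturbations via Theorem \ref{thm: qualitative, distance version} guarantees $(1+b^2)^{1/2}\|A^TJA\|$ is small. In the complementary regime, I split $f_{j,\sharp} = f_{j,\sharp,\leq M_j} + f_{j,\sharp,>M_j}$ exactly as in Lemma \ref{lemma: T' 1g 2f}, choosing $M_j$ so that $\|f_{j,\sharp,>M_j}\|_{p_j} = \|f_{j,\sharp}\|_{p_j}^2$, which gives $M_j = O\bigl(\log\|f_{j,\sharp}\|_{p_j}^{-1}\bigr)$. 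The three ``mixed'' terms containing at least one far piece are bounded via the trivial bound by $O(\|\bf\|_\bp^3) = o(\|\bf\|_\bp^2)$. For the main term $T''(f_{1,\sharp,\leq M_1}, f_{2,\sharp,\leq M_2}, g_3)$, I Taylor expand both the twisting factor $e^{ib\beta}-1$ and the translated Gaussian $g_3(-z_1-z_2-\beta e_{2d+1})$ as in the proof of Lemma \ref{lemma: T'' 3g}, where $\beta = \sigma(Ax_1,Ax_2)$ and $\alpha = t_1+t_2$. Each resulting integral lives over the compact region $B(0,M_1)\times B(0,M_2)$ where $|\beta| \leq C\|A^TJA\| M_1 M_2$ and the expansion converges uniformly.

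Each term is then bounded in absolute value. A term with scalar prefactor $|b|^r \|A^TJA\|^s$ and polynomial weight of degree at most some fixed $k$ in $(x_1,x_2,t_1,t_2)$ produces a contribution of size $|b|^r \|A^TJA\|^s \|f_{1,\sharp}\|_{p_1}\|f_{2,\sharp}\|_{p_2} M_1^k M_2^k$. For terms with $r=1, s=1$ (arising from $ib\beta$ times the leading piece of $g_3$), the lower bound $\|f_{1,\sharp}\|_{p_1}\|f_{2,\sharp}\|_{p_2} > [(1+b^2)^{1/2}\|A^TJA\|]^3$ allows one factor of $|b|\|A^TJA\|$ to be traded for $(\|f_{1,\sharp}\|_{p_1}\|f_{2,\sharp}\|_{p_2})^{1/3}$, giving a bound of order $(\|f_{1,\sharp}\|_{p_1}\|f_{2,\sharp}\|_{p_2})^{4/3} (\log\|\bf\|_\bp^{-1})^{c} = o(\|\bf\|_\bp^2)$. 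For terms with $s \geq 2$ (arising from $b^2\beta^2$, $ib\gamma_3\alpha\beta^2$, and higher), one absorbs a factor $(1+b^2)\|A^TJA\|^2$ into the target and notes that the leftover $\|f_{1,\sharp}\|_{p_1}\|f_{2,\sharp}\|_{p_2} (\log\|\bf\|_\bp^{-1})^c$ tends to zero, giving $o((1+b^2)\|A^TJA\|^2)$. The principal obstacle is purely bookkeeping: the product of two Taylor expansions produces more cross terms than appeared in either Lemma \ref{lemma: T' 1g 2f} or Lemma \ref{lemma: T'' 3g}, and unlike Lemma \ref{lemma: T'' 3g} there is no exploitable cancellation (since $f_{j,\sharp}$ is not Gaussian), so each term must be estimated individually; the saving uniformly comes from the pointwise bound $|f_{j,\sharp}|\le\eta g_j$, which ensures the logarithmic moment factors $M_j^k$ are defeated by the shrinking $\|f_{j,\sharp}\|_{p_j}$.
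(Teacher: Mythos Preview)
Your proposal is correct and follows essentially the same approach as the paper's own proof: trivial bound in the large-parameter regime, the same near/far decomposition $f_{j,\sharp}=f_{j,\sharp,\le M_j}+f_{j,\sharp,>M_j}$ with $M_j=O(\log\|f_{j,\sharp}\|_{p_j}^{-1})$, and term-by-term estimation of the Taylor expansion on the compact region. The only cosmetic difference is that the paper splits the large-parameter regime into the two separate cases $\|A^TJA\|^3\ge\|f_{1,\sharp}\|_{p_1}\|f_{2,\sharp}\|_{p_2}$ and $b^3\|A^TJA\|^3\ge\|f_{1,\sharp}\|_{p_1}\|f_{2,\sharp}\|_{p_2}$, whereas you use the single combined threshold $[(1+b^2)^{1/2}\|A^TJA\|]^3$; the two are equivalent for the purposes of the argument.
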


\begin{proof}
By the reduction to small perturbations, one may take $\|A^TJA\|\leq 1$ so that $\|A^TJA\|^3\leq \|A^TJA\|^2$. Also take $b\|A^TJA\|\leq 1$ so $b^3\|A^TJA\|^3\leq b^2\|A^TJA\|^2$.

As in the proof of Lemma \ref{lemma: T' 1g 2f}, the case of $\|A^TJA\|^3\geq \|f_{1,\sharp}\|_{p_1}\|f_{2,\sharp}\|_{p_2}$ is taken care of by the trivial bound
\begin{equation*}
|T''(h_1,h_2,g_3)|=O\left(\|h_1\|_{p_1}\|h_2\|_{p_2}\right)
\end{equation*}
The case of $b^3\|A^TJA\|^3\geq \|f_{1,\sharp}\|_{p_1}\|f_{2,\sharp}\|_{p_2}$ may be dealt with similarly.

So suppose $\|A^TJA\|^3<\|f_{1,\sharp}\|_{p_1}\|f_{2,\sharp}\|_{p_2}$ and $b^3\|A^TJA\|^3<\|f_{1,\sharp}\|_{p_1}\|f_{2,\sharp}\|_{p_2}$. Let $M_j$ and $R$ be as in the proof of Lemma \ref{lemma: T' 1g 2f}. Thus, by the proof of Lemma \ref{lemma: T' 1g 2f}, it suffices to bound $|T''(f_{1,\sharp,\leq M_1},f_{2,\sharp,\leq M_2},g_3)|$.

By Taylor expansion,

\begin{multline*}
T''(f_{1,\sharp,\leq M_1},f_{2,\sharp,\leq M_2},g_3)=\iint_R f_{1,\sharp,\leq M_1}(z_1)f_{2,\sharp,\leq M_2}(z_2)g_3(-z_1-z_2)\\\times(ib\beta-b^2\beta^2-2ib\gamma_3\alpha\beta^2-2b^2\gamma_3\alpha\beta^3+O(\beta^3+b^3\beta^3))dz_1dz_2.
\end{multline*}

Since $|\beta|\leq \|A^TJA\|\cdot|x_1|\cdot|x_2|$, the integral term coming from $ib\beta$ is controlled by 

\begin{align*}
\|f_{1,\sharp}\|_{p_1}\|f_{2,\sharp}\|_{p_2}b\|A^TJA\|M_1M_2
&C\leq \|f_{1,\sharp}\|_{p_1}^{4/3}\|f_{2,\sharp}\|_{p_2}^{4/3}\log(\|f_{1,\sharp}\|_{p_1}^{-1})\log(\|f_{2,\sharp}\|_{p_2}^{-1})\\&=o(\|\bf\|_\bp^2)
\end{align*}

The remaining terms may be dealt with similarly, instead obtaining different powers of $\|A^TJA\|, M_1$, and $M_2$, though in each case, one may check that the final result is $o(\|\bf\|_\bp^2)$ due to the presence of a power of $\|A^TJA\|$ and the $\log$ bounds for $M_j$.
\end{proof}

\begin{proof}[Proof of Proposition \ref{prop:expansion in twisting}]
As in the proof of Proposition \ref{prop:expand in group structure}, the result nearly follows from the trilinearity of $T''$, this time combined with the results of Lemmas \ref{lemma: T'' 3g}, \ref{lemma: T'' 2g 1f}, and \ref{lemma: T'' 1g 2f}. Again, there are terms with $f_{j,\flat}$ terms remaining, though by the trivial bound they result in the $O(\|\bf_\sharp\|_\bp\|\bf_\flat\|_\bp+\|\bf_\flat\|_\bp^2)$ term.
\end{proof}


\section{Treating the Euclidean Convolution Term}\label{sec:convolution term}


One may hope to complete the proof of Theorem \ref{thm: main stability} by applying Propositions \ref{prop:expand in group structure} and \ref{prop:expansion in twisting} to the expansion found in \eqref{eq:main expansion}, along with Theorem \ref{thm:ChristSY} to address the $\frac{\T(\bg+\bf,0,0)}{\prod_j\|g_j+f_j\|_{p_j}}$ term. However, Theorem \ref{thm:ChristSY} applies when $\bf$ represents the projective distance in \eqref{eq:define distance R^d}, and in our case, the projective distance \eqref{def:distance} is used-- which might not be comparable.

Rather than repeat the entire analysis of \cite{ChristSY}, it suffices to extract an intermediate theorem proven implicitly in the paper. To state this theorem requires some more definitions.

For $t>0$ and $n=0,1,2,...,$ let $P_n^{(t)}$ denote the real-valued polynomial of degree $n$ with positive leading coefficient and $\|P_n^{(t)}e^{-t\pi x^2}\|_{L^2(\R)}=1$ which is orthogonal to $P_k^{(t)}e^{-t\pi x^2}$ for all $0\leq k<n$.

For $d>1$, $\alpha=(\alpha_1,...,\alpha_{2d+1})\in\{0,1,2,...\}^{2d+1}$, and $x=(x_1,...,x_{2d+1})\in\R^{2d+1}$, define
\begin{equation*}
P_\alpha^{(t)}(x)=\prod_{k=1}^{2d+1}P_{\alpha_k}^{(t)}(x_k).
\end{equation*}

Lastly, for $1\leq j\leq 3$, let $\tau_j=\frac{1}{2}p_jp_j'$.

\begin{theorem}[\cite{ChristSY}]\label{thm:Christ intermediate}
Let $\delta_0>0$ be sufficiently small. There exists $c,\tilde{c}>0$ and a choice of $\eta>0$ in the $f_j=f_{j,\sharp}+f_{j,\flat}$ decomposition such that the following holds. Suppose $\|\bf\|_\bp<\delta_0$ and $f_j$ satisfy the following orthogonality conditions:
\begin{itemize}
\item $\langle Re(f_j),P_\alpha^{(\tau_j)}g_j^{p_j-1}\rangle=0$ whenever $\alpha=0$, $|\alpha|=1$ and $j\in\{1,2\}$, or $|\alpha|=2$ and $j=3$.
\item $\langle Im(f_j),P_\alpha^{(\tau_j)}g_j^{p_j-1}\rangle=0$ whenever $\alpha=0$ or $|\alpha|=1$ and $j=3$.
\end{itemize}
Then,
\begin{equation}\label{eq:penultimate}
\frac{\T_0(\bg+\bf)}{\prod_j\|g_j+f_j\|_{p_j}}\leq \bA_\bp^{2d}-c\|\bf\|_\bp^2-\tilde{c}\sum_j\|f_{j,\flat}\|_{p_j}^{p_j}.
\end{equation}
\end{theorem}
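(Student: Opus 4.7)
The plan is to follow the expansion method of Bianchi--Egnell as adapted by Christ to the convolution setting: Taylor expand both $\T_{\R^{2d+1}}(\bg+\bf)$ and $\prod_j\|g_j+f_j\|_{p_j}$ around $\bf=0$, using the decomposition $f_j=f_{j,\sharp}+f_{j,\flat}$ to handle the pointwise-small regime and the pointwise-large regime separately. Since $|f_{j,\sharp}|\le\eta g_j$, both expansions converge uniformly on the $\sharp$ part through any fixed order. The zeroth-order contribution reproduces $\bA_\bp^{2d}$ by the sharp Young inequality \eqref{eq: optimal Young's R^d}, and the first-order contribution vanishes because $\bg$ is a maximizer and hence a critical point of the ratio.

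The heart of the argument is the analysis of the second-order contribution, which after dividing by the product of norms defines a Hermitian quadratic form $Q$ on the real tuple $(\mathrm{Re}\,f_{j,\sharp},\mathrm{Im}\,f_{j,\sharp})_{j=1}^3$, naturally viewed in the weighted space $L^2(g_j^{p_j}\,dx)$ after factoring out a $g_j$. In this space $Q$ diagonalizes via the Hermite functions $P_\alpha^{(\tau_j)}g_j^{p_j-1}$, since $g_j^{p_j}=e^{-2\pi\tau_j|x|^2}$ where $\tau_j = \tfrac12 p_jp_j'$. The one-parameter subgroups in the symmetry group of $\T_{\R^{2d+1}}$---overall phase/scaling, translation, modulation, dilation, and the diagonal action of $Gl(2d+1)$ (modulo the rotation stabilizer of $\bg$)---each produce a zero mode of $Q$, and these zero modes are exactly the low-degree Hermite functions identified in the orthogonality hypotheses: $\alpha=0$ real and imaginary for all $j$ (scaling and phase); $|\alpha|=1$ real for $j\in\{1,2\}$ (translation, with $f_3$ compensating); $|\alpha|=1$ imaginary for $j=3$ (modulation, with $f_1,f_2$ compensating); and $|\alpha|=2$ real for $j=3$ (dilation together with the symmetric part of $Gl(2d+1)$, acting diagonally). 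After projecting off this finite-dimensional kernel via the hypotheses, the remaining spectrum of $Q$ is bounded away from zero by the Hermite spectral gap, yielding $Q(\bf_\sharp)\le -c\|\bf_\sharp\|_\bp^2$ with $c>0$ uniform for $\bp\in K$.

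For the $\flat$ part, pointwise $|f_{j,\flat}|\ge\eta g_j$ wherever nonzero, so the refined convexity inequality $|g_j+f_j|^{p_j}\ge g_j^{p_j}+p_j g_j^{p_j-1}\mathrm{Re}\,f_j+c_\eta|f_{j,\flat}|^{p_j}$ feeds through the denominator to produce the concentration term $-\tilde c\sum_j\|f_{j,\flat}\|_{p_j}^{p_j}$. Cross terms between $\sharp$ and $\flat$ components in the trilinear expansion of $\T_{\R^{2d+1}}(\bg+\bf)$ are absorbed either into the $O(\|\bf_\sharp\|_\bp^3)$ Taylor remainder or, via Young's inequality with a fractional loss and the fact that $p_j<2$, into the $\|\bf_\flat\|_{p_j}^{p_j}$ term. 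The main obstacle is the second step: matching the symmetry-generated kernel to the exact orthogonality conditions stated, and then extracting a gap $c$ that is uniform as $\bp$ ranges over the compact $K\subset(1,2)^3$; the uniformity follows from continuity of the spectrum of $Q$ in $\bp$ combined with compactness of $K$ and the fact that the gap is strictly positive for every admissible $\bp$.
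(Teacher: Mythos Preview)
Your sketch is a reasonable outline of the Bianchi--Egnell expansion argument that underlies this result, but there is nothing to compare it against in the present paper: the paper does not prove Theorem~\ref{thm:Christ intermediate} at all. It is stated with the attribution \cite{ChristSY} and accompanied only by the remark that ``\eqref{eq:penultimate} is effectively the penultimate line in the proof of Theorem~\ref{thm:ChristSY}, which used the orthogonality conditions in clear fashion.'' In other words, the author extracts this statement as a black box from Christ's proof of the Euclidean stability theorem and invokes it without reproducing the argument.

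So your proposal is not so much an alternative proof as a sketch of what the cited paper \cite{ChristSY} does. On that score the outline is broadly faithful: the second-variation quadratic form, its diagonalization in the Hermite basis associated with the weight $g_j^{p_j}$, the identification of the kernel with tangent directions to the symmetry orbit, the spectral gap on the orthogonal complement, and the uniform convexity gain on the $\flat$ part are indeed the ingredients. If you intend this as a self-contained proof rather than a summary, the step that would need the most work is the precise matching of the orthogonality list to the kernel of $Q$ and the verification that the gap is strictly positive (not merely nonnegative) on the complement; these are genuine computations in \cite{ChristSY}, not formalities.
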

While Theorem \ref{thm:Christ intermediate} was not stated explicitly in \cite{ChristSY}, \eqref{eq:penultimate} is effectively the penultimate line in the proof of Theorem \ref{thm:ChristSY}, which used the orthogonality conditions in clear fashion.

Note that the $-\tilde{c}\sum_j\|f_{j,\flat}\|_{p_j}^{p_j}$ will be useful in canceling out the contribution of the $O(\|\bf^\sharp\|_\bp\|\bf^\flat\|_\bp+\|\bf^\flat\|_\bp^2)$ term.

The following section will show that one may reduce to the case in which the orthogonality conditions of Theorem \ref{thm:Christ intermediate} hold.

\section{Balancing Lemma}\label{sec:Balancing Lemma}

In this section, we prove a Balancing Lemma which will allow us to replace a given $(\bf,A,b)$ with a nearby one in its orbit that satisfies the orthogonality conditions of Theorem \ref{thm:Christ intermediate}.

\begin{lemma}[Balancing Lemma]\label{lemma: balancing}
Let $d\geq 1$ and $\bp\in(1,2]^3$ with $\sum_jp_j^{-1}=2$. There exists $\delta_0>0$ such that if
\begin{equation}\label{eq:distance small}
\dist_\bp(\tilde{\scriptO}(\bF,A,b),(\bg,0,0))<\delta_0,
\end{equation}
then there exists $(\tilde{\bF},\tilde{A},\tilde{b})\in\tilde{\scriptO}(\bF,A,b)$ such that the orthogonality conditions of Theorem \ref{thm:Christ intermediate} are satisfied for $\tilde{\bF}$. 
\end{lemma}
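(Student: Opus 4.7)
The plan is to apply an implicit function theorem argument. I will parametrize a neighborhood of the identity in the relevant subgroup of $\G_0 \cdot \G_1$ by a vector $\theta \in \R^N$, where $N$ is chosen to equal the total number of scalar orthogonality conditions in Theorem \ref{thm:Christ intermediate}. For each admissible $(\bF,A,b)$, define the balancing map $\Theta(\theta;\bF,A,b) \in \R^N$ whose components are the inner products $\langle \mathrm{Re}(\tilde F_j - g_j),\, P_\alpha^{(\tau_j)} g_j^{p_j-1}\rangle$ and $\langle \mathrm{Im}(\tilde F_j - g_j),\, P_\alpha^{(\tau_j)} g_j^{p_j-1}\rangle$ indexed over the orthogonality conditions, where $(\tilde\bF,\tilde A,\tilde b) = \Psi_\theta(e^{-ibt}\bF \circ A^{-1},\, Id,\, 0)$. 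Solving $\Theta = 0$ in $\theta$ yields the desired $(\tilde\bF,\tilde A,\tilde b) \in \tilde{\scriptO}(\bF,A,b)$. By \eqref{eq:distance small}, after replacing $(\bF,A,b)$ by a suitable representative in its orbit, we may assume that $(e^{-ibt}\bF \circ A^{-1}, Id, 0)$ itself lies within $\delta_0$ of $(\bg,0,0)$, placing us in the IFT regime.

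The first substantive step is a bookkeeping match between symmetries and orthogonality conditions, so that the dimension count agrees. The three $\alpha=0$ conditions on $\mathrm{Re}(f_j)$ are absorbed by the magnitudes of the scalings $f_j \mapsto a_j f_j$; the three $\alpha=0$ conditions on $\mathrm{Im}(f_j)$ by their phases. The $|\alpha|=1$ conditions on $\mathrm{Re}(f_j)$ for $j \in \{1,2\}$ match the $2(2d+1)$ free translation parameters $u_1, u_2 \in \R^{2d+1}$, with $u_3$ forced by the compatibility relations. The $|\alpha|=1$ conditions on $\mathrm{Im}(f_3)$ match modulation in $x$ combined with modulation in $t$, a total of $2d+1$ parameters. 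Finally, the $|\alpha|=2$ conditions on $\mathrm{Re}(f_3)$ are matched by the dilation parameter, the shear transformation $\varphi$, and the symmetric complement to $Sp(2d)$ inside $Gl(2d)$, which together span all quadratic-in-$(x,t)$ tangent directions at $g_3$.

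Next, I would compute $D_\theta \Theta$ at $\theta=0$ and at the base point $(\bg,0,0)$. Each one-parameter symmetry differentiates at the identity to a first-order operator acting on $g_j$: scaling gives $g_j$ itself, translation gives $-v\cdot\nabla g_j$, modulation gives $i(u\cdot x)g_j$, dilation gives a linear combination of $x\cdot\nabla g_j$ and $t\partial_t g_j$, the $Gl(2d)$ action gives $-(Lx)\cdot\nabla g_j$, and so on. Each such tangent vector has the form (polynomial of degree at most $2$) times $g_j$. Since $g_j \cdot g_j^{p_j-1} = g_j^{p_j}$ carries exactly the Gaussian exponent for which the family $\{P_\alpha^{(\tau_j)}\}$ is orthonormal, the entries of $D_\theta \Theta|_{\theta=0}$ are Gaussian inner products of low-degree polynomials against orthogonal polynomials of controlled degree. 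I would then verify that the Jacobian decomposes into blocks indexed by $j$ and by $|\alpha|$, that cross-blocks vanish by parity of the integrand against the Gaussian weight, and that each diagonal block is a non-singular Gram matrix.

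With invertibility of $D_\theta \Theta$ at the base point established, continuity of $\Theta$ and its $\theta$-derivative in $(\bF,A,b)$ guarantees that the Jacobian remains invertible on a uniform neighborhood of $\theta=0$ once $\delta_0$ is chosen small enough; the implicit function theorem then supplies a solution $\theta(\bF,A,b)$ with $\Theta = 0$. The main obstacle I anticipate is the block-by-block Jacobian computation, and in particular verifying that the parameters associated to dilation, shear, and the non-symplectic complement of $Sp(2d)$ inside $Gl(2d)$ produce linearly independent second-order perturbations of $g_3$, so that the $|\alpha|=2$ block is non-singular. The underlying integrals are classical Gaussian moments, but the counting must be carried out carefully enough to rule out hidden linear dependencies among the quadratic tangent directions at $g_3$.
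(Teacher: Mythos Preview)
Your approach---an implicit function theorem argument with exactly this matching of symmetry parameters to orthogonality conditions---is the one the paper uses, and your bookkeeping of which symmetries pair with which conditions agrees with the paper's.

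The one substantive point where your outline and the paper's argument diverge is the ``WLOG'' reduction. You assert that one may replace $(\bF,A,b)$ by an orbit representative so that the base element $(e^{-ibt}\bF\circ A^{-1},Id,0)$ is itself close to $(\bg,0,0)$, and then linearize $\Psi_\theta$ around $\theta=0$ there. This does not follow from \eqref{eq:distance small}. The distance hypothesis only produces an orbit element $(\bh,M,r)=\Psi_0\Psi_1(e^{-ibt}\bF\circ A^{-1},Id,0)$ with $\bh$ close to $\bg$ and $(1+r^2)\|M^TJM\|^2$ small; to return to attached parameters $(Id,0)$ one must compose with $M^{-1}$ and strip the $t$-modulation by $r$, and neither $\|M^{-1}\|$ nor $|r|$ is individually controlled. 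Because $\tilde\scriptO$ is the ordered product set $\G_0\G_1$ acting on the base (not a group orbit), one cannot simply commute the given near-optimal $\Psi_0$ past a new perturbative $\Psi_1'$ to reduce to a close base.

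The paper handles this by keeping $\Psi_0$ (with data $L\in Gl(2d)$, $\beta\in\R$) fixed and writing the candidate element as $\Psi_0'\Psi_1'\Psi_0^{-1}(g_j+h_j)$, then reparametrizing the perturbative data so that the $(L,\beta)$-dependence is absorbed: e.g.\ $M=S^{-1}L(Id+K)$, $\gamma=\beta r^2+\alpha$, $\xi\cdot x=\zeta\cdot x+\beta\varphi((Id+K)x)$, with translations taken in the form $(LU_j,U_j')$. After this change of variables the first-order expansion in the new parameters $(b_j,U_j,U_j',K,s,\varphi,\zeta,\alpha)$ is independent of $(L,\beta)$, and the residual $(L,\beta)$-terms are second order because $\|L^TJL\|$ and $\beta\|L^TJL\|$ are $O(\delta_0)$. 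At that point your Jacobian computation at the Gaussian base point goes through verbatim and yields exactly the surjectivity check the paper performs.
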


\begin{proof}
Suppose \eqref{eq:distance small} and choose symmetries $\Psi_i\in\G_i$ ($i=1,2$) such that
\begin{equation*}
\|\Psi_0\Psi_1F_j'-g_j\|_{p_j}<\delta_0,
\end{equation*}
where $F_j'=e^{-ibt}F_j\circ A^{-1}$. Our goal is to choose $\tilde{\Psi}_j\in\G_i$ such that $\tilde{h}_j:=\tilde{\Psi}_0\tilde{\Psi}_1F_j'-g_j$ satisfies the desired orthogonality conditions.

Define $h_j:=\Psi_0\Psi_1F_j'-g_j$ so that
\begin{align*}
\tilde{h}_j&=\tilde{\Psi}_0\tilde{\Psi}_1F_j'-g_j\\
&=\tilde{\Psi}_0\tilde{\Psi}_1\Psi_1^{-1}\Psi_0^{-1}(h_j+g_j) -g_j\\
&=\Psi_0'\Psi_1'\Psi_0^{-1}(h_j+g_j)-g_j\\
&=\Psi_0'\Psi_1'\Psi_0^{-1}h_j+\Psi_0'\Psi_1'\Psi_0^{-1}g_j-g_j
\end{align*}
by defining $\tilde{\Psi}_1=\Psi_1'\Psi_1$ and $\Psi_0'=\tilde{\Psi}_0$. Recall that the precise forms of some of the symmetries in $\G_1$ depend on the values of the attached parameters. Since $\Psi_1$ originally acts when these parameters are $Id$ and 0, respectively, the form of $\Psi_1$ is fixed as such for the above computation, which merely involves triples of functions and not the attached parameters. For instance, here the translation-modulation symmetry is only used as in \eqref{eq:simplified form}, even if it is composed with other symmetries which would normally alter the attached parameters.

In the above, $\Psi_0$ is given, but its action on functions may be represented in the form

\begin{equation*}
\Psi_0a(x,t)=e^{i\beta t}a(Lx,t)
\end{equation*}
for fixed $\beta\in\R$, $L\in Gl(2d)$, and functions $a:\R^{2d+1}\to\C$.

We have some flexibility in determining $\Psi_0'$, which will be of the form

\begin{equation*}
\Psi_0'a(x,t)=e^{i\gamma t}a(Mx,t)
\end{equation*}
for $\gamma\in\R$ and $M\in Gl(2d)$ to be determined later. (In some sense, we will have $\gamma\sim \beta$ and $M\sim L$.)

Let the action of $\Psi_1'$ be as follows. Given a triple of functions, we first apply the translation symmetry with parameters $u_j=(LU_j,U_j'), w_j=(LW_j,W_j')\in\R^{2d}\times\R$ satisfying $w_1=u_2^{-1}, w_2=u_3^{-1}$, and $w_3=u_1^{-1}$, then apply the sheer symmetry with linear map $\varphi\circ L^{-1}:\R^{2d}\to\R$, dilate the functions by a factor of $r\in\R$, compose them with $S\in Sp(2d)$, modulate by a factor of $e^{i\xi\cdot x}$ with $\xi\in\R^{2d}$, and finally scale by factors of $a_j\in\C$. Thus,

\begin{multline*}
\Psi_1'f_j(x,t)=a_je^{i\xi\cdot x}f_j(LU_j+rSx+LW_j,\\
U_j'+r^2t+\varphi(L^{-1}Sx)+W_j'+\sigma(LU_j,rSx)+\sigma(LU_j,LW_j)+\sigma(rSx,LW_j)).
\end{multline*}

In combining the above symmetries, we have

\begin{multline}\label{eq:hj tilde expansion 1}
\Psi_0'\Psi_1'\Psi_0^{-1}g_j(x,t)-g_j(x,t)=a_je^{i\xi\cdot x-i\beta\varphi(L^{-1}SMx)}e^{i\gamma t-i\beta r^2t-i\beta(U_j'+W_j')}\\
\times e^{-i\beta[\sigma(LU_j,rSMx)+\sigma(LU_j,LW_j)+\sigma(rSMx,LW_j)]}\\
\times g_j(U_j+W_j+rL^{-1}SMx)\\
\times g_j(U_j'+r^2t+\varphi(L^{-1}SMx)+W_j'+\sigma(LU_j,rSMx)+\sigma(LU_j,LW_j)+\sigma(rSMx,LW_j))\\
-g_j(x)g_j(t).
\end{multline}

Here, we use $g_j(y)=e^{-\gamma_j|y|^2}$ for $y$ lying in any of $\R$, $\R^{2d}$, or $\R^{2d+1}$ depending on context.


Similarly,

\begin{multline}\label{eq:hj tilde expansion 2}
\Psi_0'\Psi_1'\Psi_0^{-1}h_j(x,t)=a_je^{i\xi\cdot x-i\beta\varphi(L^{-1}SMx)}e^{i\gamma t-i\beta t-i\beta U_j'-i\beta W_j'}\\
\times e^{-i\beta[\sigma(LU_j,rSMx)+\sigma(LU_j,LW_j)+\sigma(rSMx,LW_j)]}\\
\times h_j(U_j+W_j+rL^{-1}SMx,\\ U_j'+r^2t+\varphi(L^{-1}SMx)+W_j'+\sigma(LU_j,rSMx)+\sigma(LU_j,LW_j)+\sigma(rSMx,LW_j))
\end{multline}

Write $a_j=1+b_j, M=S^{-1}L(Id+K), r=1+s, \gamma=\beta(r^2+U_j'+W_j')+\alpha$, and $\xi\cdot x=\zeta+\beta\varphi((Id+K)x) $. Expanding the factors of \eqref{eq:hj tilde expansion 1}, we obtain for the fifth factor

\begin{multline*}
g_j(U_j+W_j+rL^{-1}SMx)=g_j(U_j+W_j+x+Kx+sx+sKx)\\
=g_j(x)e^{-\gamma_j[|U_j+W_j+x+Kx+sx+sKx|^2-|x|^2]}\\
=g_j(x)+g_j(x)x\cdot (-2\gamma_j)(U_j+W_j+Kx+sx)+O((|U_j|+|W_j|+\|K\|+|s|)^2),
\end{multline*}
where $O((|U_j|+|W_j|+\|K\|+|s|)^2)$ represents the $L^{p_j}$ norm of the remainder term.

Applying similar methods to the sixth factor,

\begin{multline*}
g_j(U_j'+r^2t+\varphi(L^{-1}SMx)+W_j'+\sigma(LU_j,rSMx)+\sigma(LU_j,LW_j)+\sigma(rSMx,LW_j))\\
=g_j(t)+g_j(t)t[-2s-2\gamma_j(U_j'+W_j'+\varphi(x))]+O((\|L^TJL\|+|U_j'|+|W_j'|+\|K\|+|s|+\|\varphi\|)^2)
\end{multline*}

Since the fifth and sixth factors together include a factor of $g_j(x,t)$, we are interested in terms which are not insignificant when multiplied by this $g_j$. For the first three factors,

\begin{multline*}
a_je^{i\xi\cdot x-i\beta\varphi(L^{-1}SMx)}e^{i\alpha t}=(1+b_j)(1+i\zeta\cdot x+O(|\zeta|^2))(1+i\alpha t+O(|\alpha|^2))\\
=1+b_j+i\zeta\cdot x+i\alpha t+O((|\zeta|+|\alpha|)^2),
\end{multline*}
where $O((|\zeta|+|\alpha|)^2)$ represents the $L^{p_j}$ norm of the remainder term when multiplied by the fifth and sixth factors. Similarly, for the fourth factor

\begin{multline*}
e^{-i\beta[\sigma(LU_j,rSMx)+\sigma(LU_j,LW_j)+\sigma(rSMx,LW_j)]}\\
=1-i\beta[r(Id+K)^TL^TJ^TLU_j\cdot x+L^TJLU_j\cdot W_j+r(Id+K)^TL^TJLW_j\cdot x]\\
+O((\beta\|L^TJL\|+\|L^TJL\|+|U_j|+|W_j|+\|K\|+|s|)^2)\\
=1+O((\beta\|L^TJL\|+\|L^TJL\|+|U_j|+|W_j|+\|K\|+|s|)^2).
\end{multline*}

Combining the above factors gives the following expression for \eqref{eq:hj tilde expansion 1}
\begin{multline}\label{eq:for implicit}
g_j(x,t)(b_j+[-2\gamma_j(U_j+W_j+Kx+sx)+i\zeta]\cdot x\\
+[-2st-2\gamma_j(U_j'+W_j'+\varphi(x))+i\alpha]t)+\text{2nd order terms}.
\end{multline}

Recall that $\|L^TJL\|,\beta\|L^TJL\|=O(\delta)$; thus, terms such as $\|L^TJL\|^2$ or $\beta\|L^TJL\|\cdot\|K\|$ are considered 2nd order in a manner that will be made precise shortly.

We now test our expression for $\tilde{h}_j$ in the inner product with the $P_\alpha^{\tau_j}g_j^{p_j-1}$. One may check, using the expression from \eqref{eq:hj tilde expansion 2}, that

\begin{equation*}
\langle\Psi_0'\Psi_1'\Psi_0^{-1}h_j,P_\alpha^{\tau_j}g_j^{p_j-1}\rangle=\langle h_j,P_\alpha^{\tau_j}g_j^{p_j-1}\rangle+(\text{1st order terms})\|h_j\|_{p_j}.
\end{equation*}

The proof will conclude upon applying the Implicit Function theorem to the map $(b_j,\zeta,\alpha,U_j,U_j',K,s,\varphi)\mapsto(\langle\tilde{h}_j,P_\alpha^{\tau_j}g_j^{p_j-1}\rangle:j\in J)$, where $J$ is the collection of indices mentioned in Theorem \ref{thm:Christ intermediate}. This will guarantee a small neighborhood of $(\langle h_j,P_\alpha^{\tau_j}g_j^{p_j-1}\rangle:j\in J)$ in which $(\langle\tilde{h}_j,P_\alpha^{\tau_j}g_j^{p_j-1}\rangle:j\in J)$ may obtain any value for some set of parameters $b_j$,...,etc. The quantitative bounds in the $O(\cdot)$ expressions guarantee that for small enough $\delta_0$, the origin is included in this neighborhood. The uniformity in these bounds guarantees the same $\delta_0$ works in all cases.

To verify the hypotheses of the Implicit Function theorem, we must show that the map
\begin{multline*}
(b_j,\zeta,\alpha,U_j,U_j',K,s,\varphi)\mapsto\\
\langle \big(b_j+[-2\gamma_j(U_j+W_j+Kx+sx)+i\zeta]\cdot x\\
+[-2\gamma_j(U_j'+W_j'+\varphi(x))+i\alpha-2st]t\big)g_j,P_\alpha^{\tau_j}g_j^{p_j-1}\rangle
\end{multline*}
is surjective.

One may see by inspection that since $b_j\in\C$, the terms containing $b_j$ correspond perfectly to the $|\alpha|=0$ cases in the orthogonality conditions. The conditions on $u_j$ and $w_j$ (and therefore on $(U_j,U_j')$ and $(W_j,W_j')$) give enough freedom to determine the $|\alpha|=1$ conditions of just the real parts for $j=1,2$. Together, $\xi$ and $\alpha$ give the condition for imaginary parts when $|\alpha|=1$ and $j=3$. Lastly, $K, s$, and $\varphi$ are together in bijective correspondence with the set of symmetric matrices on $\R^{2d}\times\R$; hence, they give the condition for $|\alpha|=2, j=3$.

\end{proof}

\section{Putting it all Together}\label{sec:main proof}

\begin{proof}[Proof of Theorem \ref{thm: main stability}]
Recall that by the translation scheme of Section \ref{sec: Reduce}, it suffices to prove Theorem \ref{thm: main distance}. By Theorem \ref{thm: Christ qualitative}, it suffices to prove Theorem \ref{thm: main distance} under the assumption of small perturbations.

Let $h_j\in L^{p_j}(\R^{2d+1})$, $B$ be a $(2d)\times(2d)$ matrix, and $r\in\R$ such that 
\begin{equation*}
\dist_\bp(\tilde{\scriptO}(\bh,B,r),(\bg,0,0))<\delta_0.
\end{equation*}
By the Balancing Lemma, there exists $F_j\in L^{p_j}$, a $(2d)\times(2d)$ matrix $A$, and $b\in\R$ such that $(\bF,A,b)\in\tilde{\scriptO}(\bh,B,r)$ and the orthogonality conditions of Theorem \ref{thm:Christ intermediate} hold for $\bF$. Choose $\eta>0$ such that the conclusion of Theorem \ref{thm:Christ intermediate} holds. Define $f_j=F_j-g_j$.

Since 
\begin{equation*}
\dist_\bp(\tilde{\scriptO}(\bh,B,r),(\bg,0,0))^2\leq \|\bf\|_\bp^2+\|A^TJA\|^2+b^2\|A^TJA\|^2,
\end{equation*}
it suffices to show there exists $c>0$ depending only on $d$ and $\bp$ such that

\begin{equation}\label{eq:last line}
\frac{\T(\bg+\bf,A,b)}{\prod_j\|g_j+f_j\|_{p_j}}\leq \bA_\bp^{2d+1}-c(\|\bf\|_\bp^2+(1+b^2)\|A^TJA\|^2).
\end{equation}

By the expansion
\begin{multline*}
\frac{\T(\bg+\bf,A,b)}{\prod_j\|g_j+f_j\|_{p_j}}=\frac{\T(\bg+\bf,A,b)-\T(\bg+\bf,A,0)}{\prod_j\|g_j+f_j\|_{p_j}}\\+\frac{\T(\bg+\bf,A,0)-\T(\bg+\bf,0,0)}{\prod_j\|g_j+f_j\|_{p_j}}+\frac{\T(\bg+\bf,0,0)}{\prod_j\|g_j+f_j\|_{p_j}},
\end{multline*}
Propositions \ref{prop:expand in group structure} and \ref{prop:expansion in twisting}, and Theorem \ref{thm:Christ intermediate}, we have

\begin{multline*}
\frac{\T(\bg+\bf,A,b)}{\prod_j\|g_j+f_j\|_{p_j}}\leq \bA_\bp^{2d+1}-c(\|\bf\|_\bp^2+(1+b^2)\|A^TJA\|^2)\\
+O(\|\bf_\sharp\|_\bp\|\bf_\flat\|_\bp+\|\bf_\flat\|_\bp^2)-\tilde{c}\sum_j\|f_{j,\flat}\|_{p_j}^{p_j}.
\end{multline*}

In the above, some sacrifice is made in the constant $c$ to absorb the $o(\cdot)$ terms. Since $p_j<2$ for all $j$ and we are working under the assumption of small perturbations, one may absorb the $O(\|\bf_\flat\|_\bp^2)$ term, again at the small expense of constants.

If $\sum_j\|f_{j,\flat}\|_{p_j}^{p_j}$ is small relative to $\|\bf\|_\bp^2$, then the $O(\|\bf_\sharp\|_\bp\|\bf_\flat\|_\bp$ term is negligible, as each $\|f_{j,\flat}\|_{p_j}$ is small. (Specifically, one may split into cases where $\|f_{j,\flat}\|_{p_j}\geq\|f_j\|_{p_j}^{(4-p_j)/2}$ for at least one $j$ or none of the $j$.) However, if $\sum_j\|f_{j,\flat}\|_{p_j}^{p_j}$ is large relative to $\|\bf\|_\bp^2$, then the last term dominates (as $p_j<2$), and the above is still negligible.

Thus, we are left with \eqref{eq:last line}, completing the proof.
\end{proof}

\section{Variant Theorem Statements}\label{sec: variants}

While the main result of \cite{ChristSY} is Theorem \ref{thm:ChristSY}, the paper also addresses cases when the hypothesis that $p_j<2$ for $1\leq j\leq 3$ is violated. We are able to extend two of these results-- one positive, the other negative-- to the case of the Heisenberg group without significant further effort.

\begin{theorem}\label{thm:variant 1}
Let $d\geq1$ and $\bp\in(1,2]^3$ be admissible. Then, there exists a $c>0$ with the following property. Let $\bf\in L^\bp(\H^d)$ and $\|f_j\|_{p_j}\neq0$ for each $j\in\{1,2,3\}$. Let $\delta\in(0,1)$ and suppose that $|\T_{\H^d}(\bf)|\geq(1-\delta)\bA_\bp^{2d+1}\prod_j\|f_j\|_{p_j}$. Then there exists a $\bp$-compatible $c\sqrt{\delta}$-diffuse ordered triple of Gaussians $\bG=(G_1,G_2,G_3)$ such that
\begin{equation}\label{eq:variant 1 conclusion}
\|f_j-G_j\|_{p_j}<c\sqrt{\delta}\|f_j\|_{p_j}\text{    for }j\in\{1,2,3\}.
\end{equation}
\end{theorem}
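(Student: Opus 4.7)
The plan is to repeat the argument for Theorem~\ref{thm: main stability} almost verbatim, with essentially one substitution: replace Theorem~\ref{thm:Christ intermediate} by its counterpart in \cite{ChristSY} valid for $\bp \in (1,2]^3$. Christ's qualitative stability theorem \cite{ChristHeisenberg} is established in the larger range $\bp \in (1,2]^3$, so the translation scheme of Section~\ref{sec: Reduce} and the reduction to small perturbations carry over. Propositions~\ref{prop:expand in group structure} and \ref{prop:expansion in twisting} rest on Taylor expansion in the group parameter $A$ and the twisting parameter $b$, together with Lemmas~\ref{lemma:onesigma}--\ref{lemma:twosigma}; nowhere do their proofs invoke $p_j < 2$, so they remain valid. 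The Balancing Lemma (Lemma~\ref{lemma: balancing}) is already stated for $\bp \in (1,2]^3$ and applies directly.

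Combining these ingredients exactly as in Section~\ref{sec:main proof} yields
\begin{equation*}
\frac{\T(\bg+\bf,A,b)}{\prod_j\|g_j+f_j\|_{p_j}} \le \bA_\bp^{2d+1}-c\bigl(\|\bf\|_\bp^2+(1+b^2)\|A^TJA\|^2\bigr) + O\bigl(\|\bf_\sharp\|_\bp\|\bf_\flat\|_\bp + \|\bf_\flat\|_\bp^2\bigr) - \tilde{c}\sum_j\|f_{j,\flat}\|_{p_j}^{p_j},
\end{equation*}
from which the conclusion \eqref{eq:variant 1 conclusion} follows once the $O(\cdot)$ error is absorbed. Note that the result is only pointwise in $\bp$, so we are free to let all constants depend on $\bp$ without concern for uniformity.

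The main obstacle is the final absorption step. When $\bp \in (1,2)^3$, the bound $\|f_{j,\flat}\|_{p_j}^2 = o(\|f_{j,\flat}\|_{p_j}^{p_j})$ in the perturbative regime is what allows $O(\|\bf_\flat\|_\bp^2)$ to be absorbed into $-\tilde{c}\sum_j\|f_{j,\flat}\|_{p_j}^{p_j}$. Admissibility of $\bp \in (1,2]^3$ forces at most one coordinate, say $p_{j_0}$, to equal $2$; the two remaining coordinates still satisfy $p_j < 2$ strictly. I would split $\|\bf_\flat\|_\bp^2 = \max_j \|f_{j,\flat}\|_{p_j}^2$ by cases according to which index attains the maximum. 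If the maximum is attained at some $j \neq j_0$, the dichotomy argument from the proof of Theorem~\ref{thm: main stability} applies verbatim. If it is attained at $j_0$ itself, then the identity $\|f_{j_0,\flat}\|_{p_{j_0}}^2 = \|f_{j_0,\flat}\|_{p_{j_0}}^{p_{j_0}}$ allows direct absorption into the $-\tilde{c}\|f_{j_0,\flat}\|_{p_{j_0}}^{p_{j_0}}$ contribution, at the cost of slightly reducing the constant $c$; one must verify that the fixed implicit constant in the $O(\cdot)$ term is dominated by $\tilde c$, which is the only genuinely new technical point.

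The remaining error term $O(\|\bf_\sharp\|_\bp \|\bf_\flat\|_\bp)$ is handled by the same dichotomy on the size of $\sum_j \|f_{j,\flat}\|_{p_j}^{p_j}$ relative to $\|\bf\|_\bp^2$ that appears in the proof of Theorem~\ref{thm: main stability}: if the sum is small, each $\|f_{j,\flat}\|_{p_j}$ is small and the cross term is negligible compared with $\|\bf\|_\bp^2$; if the sum is large, the negative $-\tilde c$ contribution dominates. This completes the proof of Theorem~\ref{thm:variant 1}.
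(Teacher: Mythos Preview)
Your overall plan is sound, but the absorption step you flag as ``the only genuinely new technical point'' is in fact a genuine gap, not a detail. The implicit constant $C$ in the $O(\|\bf_\flat\|_\bp^2)$ error comes from the trivial bound \eqref{eq:trivial bound} applied to terms like $T'(f_{1,\flat},f_{2,\flat},g_3)$; it is a fixed structural constant that does not shrink with $\eta$ or with the perturbation size. The constant $\tilde c$ in Theorem~\ref{thm:Christ intermediate} is an entirely independent constant coming from Christ's second-variation analysis. There is no mechanism to force $C<\tilde c$, so when $p_{j_0}=2$ and $\|f_{j_0,\flat}\|_{p_{j_0}}^2=\|f_{j_0,\flat}\|_{p_{j_0}}^{p_{j_0}}$, the two terms are of \emph{exactly} the same order and you cannot absorb one into the other. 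Worse, the variant of Theorem~\ref{thm:Christ intermediate} that \cite{ChristSY} actually proves in the boundary case replaces $-\tilde c\sum_{j=1}^3\|f_{j,\flat}\|_{p_j}^{p_j}$ by $-\tilde c\sum_{j=2}^3\|f_{j,\flat}\|_{p_j}^{p_j}$; the $j=j_0$ summand you want to absorb into is simply absent.

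The paper sidesteps this entirely by a different device: it \emph{redefines} the decomposition at the boundary index, setting $f_{1,\sharp}=f_1$ and $f_{1,\flat}=0$. This kills the bad error term outright, and matches the form of the negative term that \cite{ChristSY} actually supplies. The price is that $f_{1,\sharp}$ no longer satisfies the pointwise bound $|f_{1,\sharp}|\le\eta g_1$, so Lemmas~\ref{lemma: T' 1g 2f} and \ref{lemma: T'' 1g 2f} break for terms involving $f_{1,\sharp}$ and must be re-proved. The paper does this by keeping the $M_2$-truncation of $f_{2,\sharp}$ and additionally splitting $f_1$ at radius $2M_2$; on $B(0,2M_2)^c\times B(0,M_2)$ one has $|x_1+x_2|\ge M_2$, so the Gaussian $g_3$ supplies the missing decay. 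This is the substantive new work required, and your proposal does not anticipate it.
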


Note Theorem \ref{thm:variant 1} is exactly the same as Theorem \ref{thm: main stability}, except that the case where one of the $p_j=2$ is included at the expense of uniformity in $c$.

\begin{proof}
Without loss of generality, assume $p_1=2$. Since $p_2,p_3>1$ and $\sum_{j=1}^3p_j^{-1}=2$, $p_2,p_3<2$. The proof of Theorem \ref{thm:variant 1} mimics that of Theorem \ref{thm: main stability}, except one takes $f_{1,\sharp}=f_1$, hence $f_{1,\flat}=0$. We must now check every step where the particular properties of $f_{1,\sharp}$ and $f_{1,\flat}$ are used.

In treating the $T'(g_1,f_{2,\sharp},f_{3,\sharp})$ term, the earlier proof of Lemma \ref{lemma: T' 1g 2f} suffices as $f_1$ is not present. However, the proof must be modified when treating terms like $T'(f_{1,\sharp},f_{2,\sharp},g_3)$. Only split $f_{2,\sharp}=f_{2,\sharp,>M_2}+f_{2,\sharp,\le M_2}$, expanding

\begin{equation*}
T'(f_1,f_{2,\sharp},g_3)=T'(f_1,f_{2,\sharp,>M_2},g_3)+T'(f_1,f_{2,\sharp,\le M_2},g_3).
\end{equation*}

The previous analysis using the trivial bound suffices to control $T'(f_1,f_{2,\sharp,>M_2},g_3)$ as $\|f_{2,\sharp,>M_2}\|_{p_2}=\|f_{2,\sharp}\|_{p_2}^2$ still holds. For the remaining term, split $f_1=f_{1,>2M_2}+f_{1,\le2M_2}$, where $f_{1,>2M_2}=f_1\one_{B(0,2M_2)^c}$ and $f_{1,\le2M_2}=f_1\one_{B(0,2M_2)}$.

For $T'(f_{1,\le 2M_2},f_{2,\sharp,\le M_2},g_3)$, again the previous analysis will do; this time we simply gain extra powers of $\log(\|f_{2,\sharp}\|_{p_2})$ rather than $\log(\|f_{1,\sharp}\|_{p_1})$.

For $T'(f_{1,> 2M_2},f_{2,\sharp,\le M_2},g_3)$, we obtain the following integral:

\begin{multline*}
\iint_{B(0,2M_2)^c\times B(0,M_2)}f_1(z_1)f_{2,\sharp}(z_2)g_3(z_1+z_2)\\\times\left[-2\gamma_3\alpha\beta-\gamma_3\beta^2+2\gamma_3^2\alpha^2\beta^2+O(\alpha^3\beta^3)+O(\beta^4)\right]dz_1dz_2.
\end{multline*}

As before, we focus on the term with $2\gamma_3\alpha\beta$, as bounds for other terms follow similarly.

On the above domain, $|x_1+x_2|\ge M_2$, so $g_3(z_1+z_2)\leq e^{-\gamma_3(|x_1|-M_2)^2-\gamma_3(t_1+t_2)^2}$. Therefore, the term in question is controlled by

\begin{multline*}
2\gamma_3\iint_{B(0,2M_2)^c\times B(0,M_2)}f_1(z_1)f_{2,\sharp}(z_2)|x_1|e^{-\gamma_3(|x_1|-M_2)^2}|x_2|\\
\times\|A^TJA\|\cdot|t_1+t_2|e^{-\gamma_3(t_1+t_2)^2}dz_1dz_2\\
\le C\|f_1\|_{p_1}\|f_2\|_{p_2}M_2\|A^TJA\|=o(\|\bf\|_\bp^2).
\end{multline*}

The proof for Lemma \ref{lemma: T'' 1g 2f} is similar.

Lastly, we observe that an equivalent of Theorem \ref{thm:Christ intermediate} still holds. In \cite{ChristSY}, an equivalent of Theorem \ref{thm:variant 1} holds for convolution on $\R^d$. From the proof of that variant, one may extract a similar intermediate version of Theorem \ref{thm:ChristSY}; however, the $-\tilde{c}\sum_{j=1}^3\|f_{j,\flat}\|_{p_j}^{p_j}$ term is replaced with $-\tilde{c}\sum_{j=2}^3\|f_{j,\flat}\|_{p_j}^{p_j}$. This poses no problems, as $f_{1,\flat}=0$; thus there are no unfavorable terms containing $\|f_{1,\flat}\|_{p_1}$ which need be canceled.
\end{proof}

The following proposition is found in \cite{ChristSY}.

\begin{proposition}\label{prop:Christ pk>2}
Let $\bp\in(1,\infty)^3$ be admissible and let $d\ge1$. Suppose that $p_k>2$ for some index $1\le k\le 3$. Then, there exists no $c>0$ for which
\begin{equation*}
|\T_0(\bf)|\leq \left(\bA^d_\bp-c\dist_\bp(\scriptO_\mathbb{E}(\bf),\bg)^2\right)\prod_j\|f_j\|_{p_j}
\end{equation*}
holds uniformly for all $f_j\in L^{p_j}(\R^d)$.
\end{proposition}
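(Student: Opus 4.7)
The plan is to construct, for each small $\delta>0$, an explicit triple $\bf=\bf^{(\delta)}$ with $\bA_\bp^d-\Phi(\bf)=O(\delta^{p_k})$ yet $\dist_\bp(\scriptO_\mathbb{E}(\bf),\bg)\gtrsim\delta$; since $p_k>2$, the required $c$ would have to satisfy $c\leq O(\delta^{p_k-2})$ uniformly in $\delta$, which is impossible. Without loss of generality take $k=3$. Fix a nonnegative bump $\phi\in C_c^\infty(\R^d)$ supported in the unit ball with $\|\phi\|_{p_3}=1$, and for each $\delta$ choose $L=L(\delta)\sim\sqrt{\log(1/\delta)}$ so that $e^{-cL^2}=\delta^{p_3}$, where $c$ is smaller than the Gaussian decay rate of $g_3$ and of $g_1*g_2$. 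Set
\begin{equation*}
f_1=g_1,\qquad f_2=g_2,\qquad f_3=g_3+\delta\phi(\,\cdot\,-Le_1).
\end{equation*}

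For the numerator, trilinearity gives $\T_0(\bf)=\T_0(\bg)+\delta\int\phi(y-Le_1)(g_1*g_2)(-y)\,dy$, and the integral is $O(e^{-cL^2})=O(\delta^{p_3})$ by Gaussian decay, so $|\T_0(\bf)-\T_0(\bg)|=O(\delta^{1+p_3})$. For the denominator, split the integral defining $\|f_3\|_{p_3}^{p_3}$ into the unit ball $B(Le_1,1)$ (where $g_3$ is $O(e^{-cL^2})$) and its complement (where $\phi(\,\cdot\,-Le_1)$ vanishes); on the former, Taylor-expand $(g_3+\delta\phi)^{p_3}$ around $\delta\phi$. This yields
\begin{equation*}
\|f_3\|_{p_3}^{p_3}=\|g_3\|_{p_3}^{p_3}+\delta^{p_3}+O(\delta^{p_3-1}e^{-cL^2}),
\end{equation*}
and taking $p_3$-th roots gives $\|f_3\|_{p_3}=\|g_3\|_{p_3}+c_1\delta^{p_3}+O(\delta^{2p_3-1})$ with $c_1>0$ depending only on $p_3$ and $\|g_3\|_{p_3}$. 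Crucially there is no first-order-in-$\delta$ term, precisely because $p_3>1$ combined with the near-disjointness of supports suppresses the linear cross term. Combining numerator and denominator gives $\Phi(\bf)=\bA_\bp^d-c_2\delta^{p_3}+O(\delta^{1+p_3})$ for some $c_2>0$, so $\bA_\bp^d-\Phi(\bf)=\Theta(\delta^{p_3})$.

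The main obstacle is the distance lower bound: I must show $\dist_\bp(\scriptO_\mathbb{E}(\bf),\bg)\geq c_3\delta$ uniformly in $\delta$. A general symmetry of $\T_0$ on $\R^d$ acts as $\psi f_j(x)=a_je^{i\xi_j\cdot x}f_j(Tx-u_j)$ with the constraint $\sum u_j=0$, a corresponding modulation relation, a common $T\in Gl(d)$, and scalars $a_j$. The stabilizer of $\bg$ is the diagonal action of $O(d)$, since Gaussians peaked at the origin are invariant only under orthogonal $T$ with $u_j=\xi_j=0$ and $a_j=1$. If $\max_{j=1,2}\|\psi f_j-g_j\|_{p_j}<c_3\delta/2$, then the parameters $(a_j,\xi_j,u_j,T)$ restricted to $j\in\{1,2\}$ must be within $O(c_3\delta)$ of this stabilizer, forcing $|u_j|,|\xi_j|,|a_j-1|=O(c_3\delta)$ and $T=O+O(c_3\delta)$ for some $O\in O(d)$. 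Then $u_3=-u_1-u_2$ is also small, and applying these parameters to $f_3$ gives $\psi f_3(x)\approx g_3(Ox)+\delta\phi(Ox-Le_1)+O(c_3\delta)$ (in $L^{p_3}$). Because $Le_1$ rotated by $O$ is still at distance $L\to\infty$ from the origin, the bump term and the small error from rotating the Gaussian have essentially disjoint supports, and the $L^{p_3}$-norm of their sum minus $g_3$ is at least $\delta\|\phi\|_{p_3}-O(c_3\delta)=\delta-O(c_3\delta)$, which exceeds $c_3\delta$ provided $c_3$ is chosen small. Conversely, if the parameters are \emph{not} within $O(c_3\delta)$ of the $O(d)$-stabilizer, then $\max_{j=1,2}\|\psi f_j-g_j\|_{p_j}\geq c_3\delta/2$ directly. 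Either way, $\max_j\|\psi f_j-g_j\|_{p_j}\geq c_3\delta/2$, establishing the claim and completing the counterexample.
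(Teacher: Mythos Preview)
The paper does not prove Proposition~\ref{prop:Christ pk>2} at all; it merely cites the result from \cite{ChristSY}. Your construction---perturbing only $f_3$ by a bump translated to distance $L\sim\sqrt{\log(1/\delta)}$ so that the cross terms in both numerator and denominator are crushed by Gaussian decay---is the standard one, and your computation that the deficit is $\Theta(\delta^{p_3})$ while the distance is $\Theta(\delta)$ is correct.

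One remark on the distance lower bound. Your dichotomy argument is sound but the step ``if $\max_{j=1,2}\|\psi g_j-g_j\|_{p_j}<c_3\delta/2$ then the symmetry parameters lie within $O(c_3\delta)$ of the $O(d)$ stabilizer'' deserves a word of justification, since the symmetry group is noncompact. The point is that once $\|\psi g_1-g_1\|_{p_1}$ is bounded, $|\det T|$, $\|T\|$, $|u_1|$, $|\xi|$, and $|a_1|$ are all confined to a fixed compact set independent of $\delta$; on that compact set the map from parameters to $(\psi g_1,\psi g_2)$ is a smooth immersion transverse to the stabilizer orbit, so the inverse bound follows. Also note that the modulation symmetry for $\T_{\R^d}$ uses a \emph{common} $\xi$ for all three indices (so $\xi_3$ is automatically small once $\xi_1$ is), and that although $T$ is only $O(c_3\delta)$-close to orthogonal, the resulting displacement $|T^{-1}(Le_1)-O^{-1}(Le_1)|=O(c_3\delta L)=o(1)$, so the translated bump remains essentially disjoint from the origin-centered Gaussians. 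With these points made explicit, the argument is complete.
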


We establish the following variant for our generalized operator.

\begin{proposition}\label{prop:no pj>2}
Let $\bp\in(1,\infty)^3$ be admissible and let $d\ge1$. Suppose that $p_k>2$ for some index $1\le k\le 3$. Then, there exists no $c>0$ for which
\begin{equation*}
|\T(\bf,A,b)|\leq(\bA_\bp^{2d+1}-\dist_\bp(\tilde{\scriptO}(\bf,A,b),(\bg,0,0))^2)\prod_j\|f_j\|_{p_j}
\end{equation*}
holds uniformly for all $f_j\in L^{p_j}(\R^d)$.
\end{proposition}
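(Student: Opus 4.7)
The strategy is to reduce Proposition \ref{prop:no pj>2} to Proposition \ref{prop:Christ pk>2} by exploiting two facts: (i) that $\T(\bf,A,b) \to \T_0(\bf)$ as $A \to 0$ (by continuity in $A$), and (ii) that the generalized orbit is, in function space, a subset of the Euclidean orbit, so that the generalized distance dominates the Euclidean one.

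The crucial ingredient is a monotonicity lemma: for any $\bf \in L^\bp(\R^{2d+1})$, any invertible $(2d)\times(2d)$ matrix $A$, and any $b \in \R$,
\[
\dist_\bp(\tilde{\scriptO}(\bf,A,b),(\bg,0,0)) \geq \dist_{\bp}(\scriptO_\mathbb{E}(\bf),\bg).
\]
To establish this, I would show that the function-part $\bh$ of every orbit element $(\bh,M,r) \in \tilde{\scriptO}(\bf,A,b)$ lies in $\scriptO_\mathbb{E}(\bf)$. Unfolding the definition, $\bh = \Psi_0\Psi_1((e^{-ibt}\bf)\circ A^{-1})$, so it suffices to verify that each basic operation appearing is (or is a composition of) Euclidean symmetries of $\T_0$. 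The outermost $e^{-ibt}$ is a diagonal modulation, and $A^{-1}$ is a $Gl(2d) \subset Gl(2d+1)$ action; both Euclidean. The generators of $\G_0$ ($Gl(2d)$ action and modulation in $t$) and most generators of $\G_1$ (scaling, dilation, sheer, modulation in $x$, $Sp(2d)$ action) are manifestly Euclidean symmetries. The only nontrivial case is the translation-modulation symmetry at $A=Id$, whose Heisenberg translation part factors as
\[
\tau^{Id}_{u}f(x,t) = f(u_1 + x, U_1' + t + \sigma(u_1,x)) = (\text{translate by }(u_1,U_1')) \circ (\text{sheer by } \varphi(x) = \sigma(u_1,x))[f](x,t),
\]
with an analogous decomposition for $\tilde{\tau}^{Id}_w$; the extra modulation factor $M_{-bA^TJA(U_2-U_3)}$ is itself a Euclidean modulation in $x$. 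Since the extra nonnegative terms $\|M^TJM\|^2 + r^2\|M^TJM\|^2$ only strengthen the lower bound, the lemma follows by taking infima.

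Given this lemma, the conclusion is immediate. Fix $c>0$ and apply Proposition \ref{prop:Christ pk>2} with constant $c/2$ to obtain $\bf$ (normalized so $\prod_j \|f_j\|_{p_j} = 1$) satisfying $|\T_0(\bf)| > \bA_\bp^{2d+1} - (c/2)D^2$ with $D := \dist_{\bp}(\scriptO_\mathbb{E}(\bf),\bg) > 0$ (positivity of $D$ being forced, since $D = 0$ would contradict Young's inequality). By the same approximation argument used in the proof of Theorem \ref{thm: qualitative, distance version}, $\T(\bf,\delta I, 0) \to \T_0(\bf)$ as $\delta \to 0^+$. Choose $\delta > 0$ small enough that $|\T(\bf,\delta I,0) - \T_0(\bf)| < (c/2)D^2$. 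Then
\[
|\T(\bf,\delta I,0)| > \bA_\bp^{2d+1} - cD^2 \geq \bA_\bp^{2d+1} - c\,\dist_\bp(\tilde{\scriptO}(\bf,\delta I,0),(\bg,0,0))^2,
\]
by the monotonicity lemma, contradicting the hypothesized inequality for this $c$. Since $c$ was arbitrary, no valid constant exists.

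The main obstacle will be the careful bookkeeping in the monotonicity lemma: the translation-modulation symmetry in $\G_1$ depends on the attached parameter $A$, which may itself be altered by preceding $\G_1$ operations (e.g.\ the diagonal $Sp(2d)$ action). One must verify that after any sequence of such compositions, the resulting action on the function triple still factors through Euclidean symmetries of $\T_0$, rather than producing operations genuinely outside the Euclidean symmetry group.
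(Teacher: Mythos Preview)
Your monotonicity lemma is where the argument breaks. You decompose the Heisenberg translation $\tilde\tau_{w_j}^{Id}\tau_{u_j}^{Id}$ as a Euclidean translation composed with a sheer, and this is correct \emph{for each $j$ separately}; but the sheer you obtain is $\varphi_j(x)=\sigma(U_j-W_j,x)$, which depends on $j$. With the constraints $W_1=-U_2,\ W_2=-U_3,\ W_3=-U_1$ one gets $\varphi_1(x)=\sigma(U_1+U_2,x)$, $\varphi_2(x)=\sigma(U_2+U_3,x)$, $\varphi_3(x)=\sigma(U_3+U_1,x)$, and these generically differ (e.g.\ $\varphi_1-\varphi_2=\sigma(U_1-U_3,\cdot)$). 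The Euclidean $Gl(2d+1)$ symmetry, however, acts \emph{diagonally}: the same linear map on all three $f_j$. So the triple of Heisenberg translations is \emph{not} a symmetry of $\T_0$. Concretely, Heisenberg-translating the maximizing triple $\bg$ by a generic $(u_j,w_j)$ produces a triple with $|\T_0|$ strictly below $\bA_\bp^{2d+1}\prod_j\|g_j\|_{p_j}$, hence outside $\scriptO_\mathbb{E}(\bg)$. The orbit containment you assert therefore fails, and your proof of the distance inequality collapses. (You actually flag this bookkeeping as ``the main obstacle'' in your last paragraph; it is not just bookkeeping but a genuine obstruction.)

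The paper sidesteps the issue by setting $A=0$ and $b=0$ outright rather than taking $A=\delta I$ and passing to a limit. At $(A,b)=(0,0)$ one has $\T(\bf,0,0)=\T_0(\bf)$ exactly, and the translation--modulation symmetry degenerates to ordinary Euclidean translation because $\sigma(0\cdot x_1,0\cdot x_2)=0$. Every generator of the generalized symmetry group at $(0,0)$ is then manifestly a Euclidean symmetry of $\T_0$, so the generalized orbit is contained in $\scriptO_\mathbb{E}(\bf)$, the distance comparison is immediate, and Proposition~\ref{prop:Christ pk>2} gives the contradiction directly---no limiting argument needed.
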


\begin{proof}
Suppose for the sake of contradiction that such a $c>0$ exists. Then, taking the case $A=0$ and $b=0$, one recovers the statement
\begin{equation*}
|\T_0(\bf)|\leq \left(\bA^{2d+1}_\bp-c\dist_\bp(\scriptO_\mathbb{E}(\bf),\bg)^2\right)\prod_j\|f_j\|_{p_j}
\end{equation*}
for all $f_j\in L^{p_j}(\R^{2d+1})$ uniformly. This is because the symmetries for convolution on $\R^{2d+1}$ contain the symmetries for the generalized operator $\T(\bf,A,b)$ when $A=0$ and $b=0$. (More symmetries means smaller distance and therefore, weaker statement.) This contradicts Proposition \ref{prop:Christ pk>2}. Therefore, no such $c>0$ exists.
\end{proof}

As a result of Proposition \ref{prop:no pj>2} and the translation scheme developed in Section \ref{sec: Reduce}, Theorem \ref{thm: main stability} does not hold in the case $p_k>2$ for some $k$.

\end{document}